\numberwithin{equation}{section}
\newcommand\R{\mathbb{R}}
\newcommand\A{\mathfrak{A}}
\renewcommand\P{\mathcal{P}}
\newcommand\N{\mathbb{N}}
\newcommand\Z{\mathbb{Z}}
\newcommand\norm[1]{\Vert #1 \Vert}
\newcommand\pnorm[2]{{\Vert #1 \Vert}_{#2}}
\newtheorem{Theorem}{Theorem}[section]
\newtheorem{Corollary}[Theorem]{Corollary}
\newtheorem{Proposition}[Theorem]{Proposition}
\newtheorem{Example}[Theorem]{Example}
\newtheorem{Lemma}[Theorem]{Lemma}
\theoremstyle{remark}
\newtheorem{Remark}[Theorem]{Remark}
\theoremstyle{definition}
\newtheorem{Definition}[Theorem]{Definition}
\newtheorem{Notation}[Theorem]{Notation}
\begin{document}

\title{On the second parameter of an\\ $(m, p)$-isometry}
\author{Philipp Hoffmann\thanks{Philipp Hoffmann is currently a PhD-student at the School of Mathematical Sciences,
University College Dublin (National University of Ireland). He is funded by the school as a Research
Demonstrator}, Michael Mackey and Mícheál Ó Searcóid}

\date{November 3, 2011}
\maketitle

\begin{center}
\emph{Appeared in\footnote{The final publication is available at 
www.springerlink.com \\ http://www.springerlink.com/content/jx47988153826m44/}: Integr. Equ. Oper. Theory 71 (2011), 389-405\\$\textrm{}$\\$\textrm{}$\\}
\end{center}

\begin{abstract}
  A bounded linear operator $T$ on a Banach space $X$ is called an
  $(m, p)$-isometry if it satisfies the equation
  $\sum_{k=0}^{m}(-1)^{k} {m \choose k}\|T^{k}x\|^{p} = 0$, for all $x
  \in X$. In this paper we study the structure which
  underlies the second parameter of $(m, p)$-isometric operators.  We
  concentrate on determining when an $(m, p)$-isometry is a $(\mu,
  q)$-isometry for some pair ($\mu, q)$.   We also extend
  the definition of $(m, p)$-isometry, to include $p=\infty$ and study
  basic properties of these $(m, \infty)$-isometries.
\end{abstract}

{\bf Keywords:}\ Banach space, operator, $m$-isometry, $(m, p)$-isometry

\pagestyle{myheadings}
\markright{On the second parameter of an $(m, p)$-isometry}

\section{Introduction}
Let $H$ be a Hilbert space over $\mathbb{K}\in\{\mathbb{R,C}\}$, $B(H)$ the algebra
of bounded linear operators $T$ on $H$. Let further
the symbol $\mathbb{N}$ denote the natural numbers including $0$ and
denote $\mathbb{N}^+:=\mathbb{N}\setminus \{0\}$.  For $m \in
\mathbb{N}^+$ an operator $T \in B(H)$ is called an \emph{$m$-isometry} if
\begin{align}\label{original}
	\sum_{k=0}^{m}(-1)^{k}{m \choose k} T^{*k}T^{k}=0,
\end{align}
where $T^*$ denotes the Hilbert adjoint of $T$. (We exclude the
trivial case of $m=0$.)
This condition is obviously a generalization of the notion of an
isometry. In particular, $1$-isometries are isometries and an isometry
is an $m$-isometry for every $m \in \mathbb{N}^+$. Operators of this
type have been introduced (for $m=2$) by Agler in \cite{ag} and,
independently, by Richter in \cite{ri}. They have been studied
extensively for general $m$ by Agler and Stankus in three papers
\cite{ag-st1, ag-st2, ag-st3}. Basic properties of $m$-isometries
include the facts that every $m$-isometry is an $(m+1)$-isometry,
that $m$-isometries are bounded below and that their spectrum
$\sigma(T) \subseteq \mathbb{K}$ lies in the closed unit disc. The
dynamics of $m$-isometric operators have been studied in
\cite{bemarmart} and 
\cite{Ah-He}.

If $H$ is a complex Hilbert space, condition~\eqref{original} can be
rewritten as
\begin{align}\label{m-isometry SA}
	\sum_{k=0}^{m}(-1)^{k}{m \choose k} \|T^{k}x\|^{2}&=0, \ \forall x \in H,
\end{align}
and this formulation can be interpreted in an arbitrary Banach space.
Operators which satisfy (\ref{m-isometry SA}) have been studied by Sid
Ahmed on complex Banach spaces in \cite{sa}
and on general function spaces and $\ell_p$ spaces by Botelho in \cite{bo}.
In a Banach space, however, there is no intrinsic motivation for the
square of the norm which appears in the definition of an
$m$-isometry. Bayart \cite{ba} introduced  a more general term: an
operator $T \in B(X)$ (where $X$ is a Banach space over $\mathbb{K}$)
is called an \emph{$(m, p)$-isometry}, if there exist an $m \in \mathbb{N}^+$
and a $p \in [1, \infty)$, such that
\begin{align}\label{Bayart}
	\sum_{k=0}^{m}(-1)^{k}{m \choose k} \|T^{k}x\|^{p}=0, \ \ \forall x\in X.
\end{align}
Bayart showed that all basic properties of $m$-isometries on
Hilbert spaces (which we should now refer to as $(m, 2)$-isometries)
carry over to $(m, p)$-isometries on Banach spaces and, further, that
$(m, p)$-isometries  are never
$N$-supercyclic if $X$ is of infinite dimension and complex.
In this paper, we do not impose the restriction $p\ge 1$ and thus will call an operator $T \in B(X)$ an
\emph{$(m,p)$-isometry}, if there exists an $m \in \N^+$ and a $p \in (0,\infty)$, such that (\ref{Bayart}) is
satisfied. Most results in the literature remain valid, with their existing proofs, for $p$ in this
extended range. (Example
\ref{example(2,p)} is an exception.)  Nevertheless, we have found it
appropriate to include short alternative proofs of several known
results.  

Most examples of non-trivial $(m, p)$-isometries in the literature are given in the setting of Hilbert space and 
satisfy (\ref{m-isometry SA}). Botelho gives some examples involving $\ell_2$ direct sums of Banach
spaces in \cite{bo}.  We will refer to two motivating examples from \cite{sa}.

\begin{Example}\label{example finite} 
$T = \left(\begin{array}{cc}
      1 & 1\\
      0 & 1\\
    \end{array} \right)$ on $(\mathbb{C}^{2}, \pnorm{.}{2})$ is a $(3, 2)$-isometry.
\end{Example}  

It is easy to check that this example is neither a $(3, 1)$-isometry,
nor a $(3, 3)$-isometry.  One may ask whether this
operator is a $(3, p)$-isometry for any $p \neq 2$. We will see that it is not, but that $T$ is a $(5, 4)$-isometry, a
$(7,6)$-isometry, a $(9,8)$-isometry and so on.

In the first part of this paper, we will concentrate on determining
the pairs 
 $(\mu, q)$, with $\mu \in \mathbb{N}$, $\mu \geq 2$
and $q \in (0,\infty)$, for which an $(m, p)$-isometry is also a $(\mu,
q)$-isometry.\\

The second example (\cite[Example 2.4]{sa}) is a weighted right shift
operator on the Hilbert space $\ell_2$. The example can be adapted to
 general $\ell_p$ spaces, $1 \leq p < \infty$.

\begin{Example}\label{example(2,p)}
Let $p \in [1, \infty)$, $\lambda \geq 1$ and $T_{p} \in B(\ell_{p})$ be a weighted right-shift operator with 
weight sequence $(\lambda_{n})_{n\in \mathbb{N}}$. That is, for $x = (x_k)_{k \in \mathbb{N}} \in \ell_p$
\begin{align*}
	(T_{p}x)_{n} = \left\{ \begin{array} {ll}
     0, \ \ \ &\textrm{if} \ n = 0,
     \\[1ex]
     \lambda_{n}x_{n-1}, \ \ \ &\textrm{if}\ n \geq 1
\end{array} \right.
\end{align*}
where $(\lambda_{n})_{n\in \mathbb{N}}$ is given by
\begin{align*}
	\lambda_{n}=\bigg{(}\frac{1+(n+1)(\lambda^{2}-1)}{1+n(\lambda^{2}-1)}\bigg{)}^{1/p}.
\end{align*}
(Note that for $\lambda \geq 1$, this is well-defined.) Then $T_{p}$
is a $(2, p)$-isometry. 
This follows from the fact that $(\lambda_{n+2}\lambda_{n+1})^{p}-2(\lambda_{n+1})^{p} = -1$.\\
Remark that for $\lambda >1$ we have $\|T_px\|>\|x\|$, for all
non-zero $x$ and, in particular, $T_p$ is not 
an isometry for $\lambda >1$.
\end{Example}

One may now like to consider the analogue of this example in $\ell_{\infty}$. As
$p$ tends to infinity, the weight sequence becomes a sequence of ones
and $T_{\infty}$ will just be the right shift operator, which is an
isometry. However, it is natural to seek a non-isometric operator
which is a $(2, \infty)$-isometry - a term which so far has not been defined.
In the later parts of this paper, we will give a natural definition of
$(m, \infty)$-isometries and we will show that their basic properties
coincide with those of $(m, p)$-isometries.

\section{Preliminaries}

In this section, we cite some basic results concerning $(m,p)$-isometric
operators proven
by Bayart in \cite{ba}.   There, it is assumed that $p \geq 1$, but on
inspection it is clear that that this restriction is unnecessary and
one can allow $p \in (0,\infty)$. However, we will provide alternative
proofs for (\ref{poly}) and Propositions \ref{power bounded} and \ref{inv} in the next
section. (Then (\ref{beta_m-1}) follows and (\ref{page 3 ba}) obviously does not depend on the range of $p$.)

For $T \in B(X)$, $p \in (0, \infty)$ and $l \in \mathbb{N}$ define the functions
$\beta_{l}^{(p)}(T, \cdot): X \rightarrow \mathbb{R}$, by
\begin{align*}
	\beta_{l}^{(p)}(T, x):= \frac{1}{l!}\sum_{j=0}^{l}(-1)^{l -j}{l \choose j}\|T^{j}x\|^{p}.
\end{align*}
(This is analogous to \cite{ba}.) $T$ is an $(l, p)$-isometry, iff 
$\beta^{(p)}_{l}(T, \cdot)\equiv 0$.

For $k, n \in \mathbb{N}$ denote the \emph{(descending) Pochhammer symbol} by $n^{(k)}$, i.e.
\begin{align*}
	n^{(k)}:= \left\{ \begin{array} {lll}
					1, \ \ &\textrm{if} \ &n = 0,\\[1ex]
					0, &\textrm{if} \ &n > 0, \ k > n,\\[1ex]
       				{n \choose k}k!, &\textrm{if} \ &n > 0, \ k \leq n.
       				\end{array} \right.
\end{align*}
Then for $n > 0$, $k > 0$ and $k \leq n$ we have
\begin{align*}
	n^{(k)} = n(n-1)\cdots(n-k+1).
\end{align*}
By \cite[(2)]{ba} we have, for all $T \in B(X)$ and for all $n \in \mathbb{N}$,
\begin{align}\label{general poly}
	\|T^{n}x\|^{p}=\sum_{k=0}^{n}n^{(k)}\beta^{(p)}_{k}(T, x), \ \ \forall x \in X.
\end{align}
Further, by \cite[page 3]{ba}, the functions $\beta_{l}^{(p)}(T, \cdot)$ satisfy
\begin{align}\label{page 3 ba}
	l!\big(\beta_{l}^{(p)}(T, Tx)-\beta_{l}^{(p)}(T, x)\big) = (l+1)!\beta_{l+1}^{(p)}(T, x), 
		\ \ \forall x \in X.
\end{align}

This shows that an $(m, p)$-isometry is an $(m+1, p)$-isometry. Hence, if $T$ is an $(m ,p)$-isometry, we have \cite[Proposition 2.1]{ba}, on discarding terms from (\ref{general poly}), that
\begin{align}\label{poly}
	\|T^{n}x\|^{p}=\sum_{k=0}^{m-1}n^{(k)}\beta^{(p)}_{k}(T, x), \ \ \forall x \in X \ \textrm{and all} \ 
	n \in \mathbb{N}.
\end{align}
Finally, we get equation \cite[(4)]{ba} for an $(m, p)$-isometry $T$:
\begin{align}\label{beta_m-1}
	\lim_{n\rightarrow \infty} \frac{\|T^{n}x\|^{p}}{n^{m-1}} = \beta_{m-1}^{(p)}(T, x), \ \ \forall x \in X.
\end{align}
This implies the following useful proposition (see also \cite[Corollary 2.4]{bemarmart}):
\begin{Proposition}\label{power bounded}
Let $T \in B(X)$ be an $(m, p)$-isometry such that for each $x \in X$ there exists a real number 
$C(x) > 0$, with
\begin{align*}
	\|T^{n}x\| \leq C(x), \ \forall n \in \mathbb{N}.
\end{align*}
Then $T$ is an isometry.
\end{Proposition}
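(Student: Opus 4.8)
The plan is to regard $\|T^{n}x\|^{p}$, for a fixed $x \in X$, as a function of the single variable $n$ and to exploit the polynomial representation \eqref{poly}. Since $T$ is an $(m,p)$-isometry, \eqref{poly} expresses $\|T^{n}x\|^{p}$ as $\sum_{k=0}^{m-1} n^{(k)}\beta_{k}^{(p)}(T,x)$. For $n \geq m-1$ each Pochhammer symbol $n^{(k)}$ equals $n(n-1)\cdots(n-k+1)$, a polynomial in $n$ of degree exactly $k$, so on this range $n \mapsto \|T^{n}x\|^{p}$ agrees with a polynomial $q_{x}(n)$ of degree at most $m-1$ whose coefficients are fixed real combinations of the numbers $\beta_{0}^{(p)}(T,x),\ldots,\beta_{m-1}^{(p)}(T,x)$.

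The hypothesis supplies the key analytic input: $\|T^{n}x\| \leq C(x)$ for all $n$ forces $\|T^{n}x\|^{p} \leq C(x)^{p}$, so $q_{x}$ is bounded on the infinite set $\{n \in \N : n \geq m-1\}$. A polynomial that stays bounded along a sequence tending to infinity must be constant; hence $q_{x}$ has degree $0$. Because $\{1,n,\ldots,n^{m-1}\}$ and $\{n^{(0)},\ldots,n^{(m-1)}\}$ are related by an invertible (triangular) change of basis, the vanishing of every coefficient of $q_{x}$ above the constant term is equivalent to $\beta_{k}^{(p)}(T,x)=0$ for each $k=1,\ldots,m-1$. (Alternatively one may read $\beta_{m-1}^{(p)}(T,x)=0$ off \eqref{beta_m-1} directly, since the bounded sequence $\|T^{n}x\|^{p}/n^{m-1}$ tends to $0$, and then peel off the lower coefficients by a descending induction; either route gives the same conclusion.)

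With all the higher $\beta_{k}^{(p)}(T,x)$ equal to zero, I would feed this back into \eqref{poly}: for every $n \in \N$ the sum collapses to its $k=0$ term, giving $\|T^{n}x\|^{p} = n^{(0)}\beta_{0}^{(p)}(T,x) = \beta_{0}^{(p)}(T,x) = \|x\|^{p}$, the last equality being the definition of $\beta_{0}^{(p)}$. Taking $n=1$ yields $\|Tx\|=\|x\|$, and since $x \in X$ was arbitrary, $T$ is an isometry.

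I do not anticipate a genuine obstacle: the argument is essentially the observation that a bounded polynomial is constant, dressed in the Pochhammer representation supplied by \eqref{poly}. The only point demanding a little care is the bookkeeping at small indices $n < m-1$, where $n^{(k)}$ is defined piecewise; this is harmless, because the boundedness argument only needs the tail $n \geq m-1$ to conclude that the $\beta_{k}^{(p)}(T,x)$ vanish, after which the clean identity $\|T^{n}x\|^{p}=\|x\|^{p}$ holds for all $n$ directly from \eqref{poly}.
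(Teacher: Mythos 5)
Your proof is correct, but its main route is not the one the paper uses immediately after the statement, so a comparison is in order. The paper's own proof uses only the limit formula \eqref{beta_m-1}: since $\|T^{n}x\|^{p}/n^{m-1}$ is nonnegative and bounded above by $C(x)^{p}/n^{m-1}\to 0$, the limit $\beta_{m-1}^{(p)}(T,x)$ vanishes for every $x$, so $T$ is an $(m-1,p)$-isometry; repeating the argument with $m-1$ in place of $m$ (a descending induction) terminates at a $(1,p)$-isometry, i.e.\ an isometry. You instead kill all of $\beta_{1}^{(p)}(T,x),\dots,\beta_{m-1}^{(p)}(T,x)$ in one stroke: the sequence $n\mapsto\|T^{n}x\|^{p}$ agrees (for $n\geq 1$, in fact) with a polynomial of degree at most $m-1$, a polynomial bounded on an infinite subset of $\N$ is constant, and the triangular change of basis between monomials and falling factorials then forces every coefficient above the constant term to vanish, after which $\|T^{n}x\|^{p}=\|x\|^{p}$ is read off from \eqref{poly} with no iteration. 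This is precisely the viewpoint the paper itself adopts later, in its ``alternative approach'' section, where it remarks that Proposition \ref{power bounded} follows from the fact that a polynomial which is bounded on $\N$ is constant; your write-up fleshes out that one-line remark, and conversely the alternative you sketch in parentheses (read $\beta_{m-1}^{(p)}(T,x)=0$ off \eqref{beta_m-1}, then peel off lower coefficients by descending induction) is essentially the paper's own proof. What each buys: the paper's induction needs only the single limit formula and no linear-algebra bookkeeping; yours avoids iteration entirely and makes explicit the structural reason behind the proposition -- that the orbit norms are interpolated by a polynomial -- which is exactly the theme the rest of the paper develops.
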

\begin{proof}
Since $T$ is an $(m, p)$-isometry, we have
\begin{align*}
	0 \leq \beta_{m-1}^{(p)}(T, x)=
	\lim_{n \rightarrow \infty}\frac{\|T^{n}x\|^{p}}{n^{m-1}} \leq 
	\lim_{n \rightarrow \infty}\frac{(C(x))^{p}}{n^{m-1}} 
	= 0, \ \forall x \in X.
\end{align*}
Thus, $T$ is an $(m-1, p)$-isometry.
Applying the same argument sufficiently often gives that $T$ is an isometry.
\end{proof}

Observe that we have not made use of the uniform boundedness principle above. We deliberately avoid appealing 
to the linearity or continuity of $T$ where possible.\\ 

Using (\ref{beta_m-1}), Bayart showed the following in \cite[Proposition 2.4.(b)]{ba}:
\begin{Proposition}\label{inv}
If $T \in B(X)$ is an invertible $(m, p)$-isometry and $m$ is even then $T$ is an
$(m-1, p)$-isometry.
\end{Proposition}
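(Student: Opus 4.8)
The goal is to show that $\beta_{m-1}^{(p)}(T,x)=0$ for every $x\in X$, since by the characterisation recalled above this is exactly the statement that $T$ is an $(m-1,p)$-isometry. The plan is to exploit the invertibility of $T$ to extend the polynomial identity (\ref{poly}) from $n\in\N$ to all $n\in\Z$, and then to play the nonnegativity of $\|T^nx\|^p$ off against the parity of $m-1$.

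First I would fix $x\in X$ and set $g(n):=\|T^nx\|^p$ for $n\in\Z$, which is meaningful precisely because $T$ is invertible. For a fixed $N\in\N^+$, applying (\ref{poly}) to the vector $T^{-N}x$ yields a polynomial $Q$ of degree at most $m-1$ with $\|T^n(T^{-N}x)\|^p=Q(n)$ for all $n\in\N$. Since $\|T^n(T^{-N}x)\|^p=\|T^{n-N}x\|^p=g(n-N)$, this reads $g(j)=Q(j+N)$ for all $j\ge -N$. On the other hand, (\ref{poly}) applied to $x$ directly gives $g(j)=P(j)$ for all $j\ge 0$, where $P(n):=\sum_{k=0}^{m-1}n^{(k)}\beta_k^{(p)}(T,x)$. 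The two polynomials $P(j)$ and $Q(j+N)$ therefore agree on the infinite set $\{0,1,2,\dots\}$, so they coincide identically; hence $g(j)=P(j)$ for all $j\ge -N$. As $N$ was arbitrary, I conclude that $\|T^nx\|^p=P(n)$ for every $n\in\Z$.

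Now I would use that $g(n)=\|T^nx\|^p\ge 0$ for all $n\in\Z$, so the polynomial $P$ is nonnegative at every integer. Its leading term is $\beta_{m-1}^{(p)}(T,x)\,n^{m-1}$, and by (\ref{beta_m-1}) the leading coefficient satisfies $\beta_{m-1}^{(p)}(T,x)=\lim_{n\to\infty}\|T^nx\|^p/n^{m-1}\ge 0$. Since $m$ is even, the exponent $m-1$ is odd, so $n^{m-1}\to-\infty$ as $n\to-\infty$. If $\beta_{m-1}^{(p)}(T,x)$ were strictly positive, then $P(n)\to-\infty$ along the negative integers, contradicting $P(n)\ge 0$. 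Hence $\beta_{m-1}^{(p)}(T,x)=0$, and since $x$ was arbitrary, $T$ is an $(m-1,p)$-isometry.

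I expect the step most in need of care to be the extension of (\ref{poly}) to negative integers: one must verify that the polynomial obtained by applying (\ref{poly}) to $T^{-N}x$ really is the $N$-shift of $P$, which is exactly the point where ``two polynomials agreeing at infinitely many points coincide'' is invoked. Everything after that is a one-line parity argument, and the evenness of $m$ enters only to force $m-1$ to be odd, which is precisely what manufactures the contradiction from a strictly positive leading coefficient.
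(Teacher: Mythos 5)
Your proof is correct and follows essentially the same route as the paper's: both use invertibility to extend the interpolating polynomial from $\N$ to $\Z$ (via uniqueness of polynomials agreeing on an infinite set, applied to the sequence generated by $T^{-N}x$), and then rule out odd leading behaviour using nonnegativity of $\|T^nx\|^p$ at negative integers. The only cosmetic difference is that you phrase the conclusion as vanishing of the leading coefficient $\beta_{m-1}^{(p)}(T,x)$, whereas the paper phrases it as the degree of $P$ being at most $m-2$; these are equivalent.
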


This implies in particular, since $(m, p)$-isometries are bounded
below by definition (see also \cite[Lemma 2.1]{sa}), that
there are no non-trivial examples for $(2,p)$-isometries in finite dimensions.\\

Since referring to an operator as an $(m, p)$-isometry does not
exclude the possibility of this operator being an $(m-1, p)$-isometry,
some statements could become convoluted and this motivates
the following terminology.

\begin{Definition}
Let $T \in B(X)$ be an $(m, p)$-isometry and not an $(m-1, p)$-isometry. Then we call $T$ a 
\emph{strict $(m, p)$-isometry} or say that $T$ is $(m, p)$\emph{-strict}.
\end{Definition}

\section{$(m,p)$-isometries - an alternative approach}

There is little about the basic theory of
$(m, p)$-isometries which depends on linearity or continuity of the operator
in question.  Therefore, we find it useful to present an
alternative and perhaps more natural approach which
simplifies many of the proofs. 

\begin{Notation}
Throughout, $\P^n$ denotes the space of real polynomial functions of
degree less than or equal to $n$. Let $\mathfrak F$ denote
the set of real functions whose domain is a subset of $\R$ that is invariant under the
mapping $s:x\rightarrow x+1$. We define $D$ on $\mathfrak F$
by setting $Df:=f-(f\circ s)$ for each $f\in\mathfrak F$.
Note that $Df\in\mathfrak F$, so that $D:\mathfrak F\rightarrow\mathfrak F$ and we can
form successive iterates $D^nf$ on $\mathfrak F$. Then $D^mf = \sum_{k=0}^{m}(-1)^{k}\binom{m}{k}(f\circ s^k)$ 
for all $f\in\mathfrak F$ and $m \in \mathbb{N}$. 

Let now $\A$ be the set of all real sequences and note that $\A\subseteq\mathfrak F$ and $D(\A)\subseteq\A$ and 
$D^ma = \big(\sum_{k=0}^{m}(-1)^{k}\binom{m}{k}a_{n+k}\big)_{n \in \mathbb{N}}$, for all $a\in\A$ and 
$m \in \mathbb{N}$.\\
Denote by $\A^+$ the positive cone of $\mathfrak A$. For $a=(a_n)_{n \in \mathbb{N}}\in
\mathfrak A^+$ and $p\in(0,\infty)$, the sequence $(a_n^p)_{n \in \mathbb{N}}\in \mathfrak A^+$ will be denoted 
by $a^p$. Then, for each $p \in (0, \infty)$ and $m \in \mathbb{N}^+$, we define
\begin{itemize}
	\item[(i)]  
		$\A_{m,p}:=\{a \in \A^+ \mid D^ma^p=0\}$,
	\item[(ii)]
		$\widehat{\A}_{m,p}:=\{a \in \A^+ \mid D^ma^p=0 \ \ \textrm{and} \ \ D^{m-1}a^p \neq 0\}$;
\end{itemize}
and, for each $a\in\A^+$, we define
\begin{itemize}
	\item[(iii)]
		$\rho(a):=\{(m,p)\mid m\in\N^+, p\in(0,\infty), a\in\A_{m,p}\}$,
	\item[(iv)]
		$\hat\rho(a):=\{(m,p)\mid m\in\N^+, p\in(0,\infty), a\in\widehat\A_{m,p}\}$.
\end{itemize}
\end{Notation}

The following result is surely well-known but, lacking a reference,
we provide a proof for completeness.

\begin{Proposition}\label{altlemm}
\begin{enumerate}[(i)]
	\item
		$D(\P^0)=\{0\}$ and $D(\P^{m+1})=\P^{m}$ for all $m\in\mathbb{N}$; 
	\item
		for $a\in\A$ and $m\in\mathbb{N}$, $D^{m+1}a=0$ if, and only if, there exists $P\in\P^{m}$ such that
		$a=P|_{\mathbb{N}}$. Moreover, at most one real polynomial function $P$ satisfies $a=P|_{\mathbb{N}}$. 
\end{enumerate}
\end{Proposition}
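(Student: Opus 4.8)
The plan is to establish (i) first, then bootstrap (ii) from it. For (i), the equality $D(\P^0)=\{0\}$ is immediate, since $Dc=c-(c\circ s)=0$ for any constant $c$. The content is $D(\P^{m+1})=\P^m$. First I would record the effect of $D$ on the degree: if $P(x)=cx^d+(\text{lower order})$ with $d\ge 1$ and $c\neq 0$, then the binomial expansion of $P(x+1)$ gives $DP(x)=P(x)-P(x+1)=-cd\,x^{d-1}+(\text{lower order})$, so $\deg(DP)=d-1$ exactly. Combined with $D(\P^0)=\{0\}$, this yields the inclusion $D(\P^{m+1})\subseteq\P^m$ and identifies $\ker(D|_{\P^{m+1}})=\P^0$. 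The reverse inclusion $\P^m\subseteq D(\P^{m+1})$ I would obtain by a dimension count: $D|_{\P^{m+1}}$ is linear with a one-dimensional kernel, so by rank--nullity its image has dimension $(m+2)-1=m+1=\dim\P^m$; being a subspace of $\P^m$ of full dimension, the image is all of $\P^m$.

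For the forward implication of (ii), the key observation is that $D$ on sequences is compatible with restriction, i.e. $D(P|_{\N})=(DP)|_{\N}$ for every polynomial $P$, and hence $D^{m+1}(P|_{\N})=(D^{m+1}P)|_{\N}$. If $P\in\P^m$, then iterating (i) gives $D^{m+1}P\in D^{m+1}(\P^m)=\{0\}$, so $D^{m+1}(P|_{\N})=0$.

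For the converse, suppose $a\in\A$ satisfies $D^{m+1}a=0$. Let $P$ be the (Lagrange) interpolating polynomial of degree at most $m$ through the $m+1$ points $(0,a_0),\dots,(m,a_m)$, and set $b:=a-P|_{\N}$. Then $b_0=\dots=b_m=0$, and $D^{m+1}b=D^{m+1}a-D^{m+1}(P|_{\N})=0$ by the forward implication. Writing out $(D^{m+1}b)_n=\sum_{k=0}^{m+1}(-1)^k\binom{m+1}{k}b_{n+k}=0$ and noting that the coefficient $(-1)^{m+1}$ of the top term $b_{n+m+1}$ is nonzero, this is a recurrence expressing $b_{n+m+1}$ as a linear combination of $b_n,\dots,b_{n+m}$. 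Starting from $b_0=\dots=b_m=0$, an easy induction gives $b_n=0$ for all $n$, so $a=P|_{\N}$ with $P\in\P^m$. Uniqueness is separate and elementary: if $P|_{\N}=Q|_{\N}$ then $P-Q$ vanishes on the infinite set $\N$ and is therefore the zero polynomial.

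I expect the main obstacle to be the converse of (ii): the surjectivity in (i) feeds the forward implication, but turning $D^{m+1}a=0$ back into a polynomial requires the interpolation-plus-recurrence argument above, where the crucial point is that the top coefficient of the recurrence is a unit, so the first $m+1$ entries of $b$ determine all the rest.
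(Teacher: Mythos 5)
Your proof is correct, but it follows a genuinely different route from the paper's. In (i), you identify $\ker(D|_{\P^{m+1}})=\P^0$ via the exact degree drop $\deg(DP)=\deg P-1$ and then obtain $D(\P^{m+1})=\P^m$ by rank--nullity; the paper instead solves $P=Q-(Q\circ s)$ directly, equating coefficients to get a triangular (hence solvable) linear system for the non-constant coefficients of $Q$. The divergence is greater in (ii): for the hard direction the paper inducts on $m$ --- from $D^{k+2}a=0$ it applies the induction hypothesis to $Da$, lifts the resulting polynomial through $D$ using the surjectivity from (i), and observes that $a$ minus the lift is constant --- whereas you construct the candidate polynomial outright as the Lagrange interpolant of the first $m+1$ terms of $a$ and show $b:=a-P|_\N$ vanishes identically, because $D^{m+1}b=0$ is a linear recurrence of order $m+1$ whose top coefficient $(-1)^{m+1}$ is nonzero and whose initial data $b_0=\cdots=b_m=0$ vanish. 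One point of your commentary should be corrected: your argument for (ii) uses only the inclusion $D(\P^{m+1})\subseteq\P^m$ (so that $D^{m+1}(\P^m)=\{0\}$) and never the surjectivity half of (i), so the remark that ``the surjectivity in (i) feeds the forward implication'' is inaccurate; in your version the two parts are logically decoupled, while the paper's induction genuinely needs surjectivity. What your route buys is the explicit fact that a sequence annihilated by $D^{m+1}$ is recovered by interpolating its first $m+1$ terms --- precisely the Lagrange-interpolation viewpoint the paper exploits later when proving the reproducing formulae for $(m,p)$-isometries; what the paper's route buys is a self-contained difference-calculus argument (an ``antidifferencing'' step) that needs no appeal to interpolation or recurrences.
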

\begin{proof}
Suppose $m\in\N$. It is easy to check that $D(\P^0)=\{0\}$ and that  $D(\P^{m+1})\subseteq  \P^{m}$.  For the reverse inclusion, suppose $P\in\P^{m}\backslash\{0\}$. We want to
solve the equation $P=Q-(Q\circ s)$ for some $Q\in \P^{d+1}$, where $d$ is the degree of $P$.
Every real polynomial function has a unique representation as a linear combination of power functions; 
equating the coefficients of those power functions on the two sides of the equation $P=Q-(Q\circ s)$, we get $d+1$ linear equations in the $d+1$ unknown coefficients (excluding the constant term) of the
proposed polynomial function $Q$. Observe that each of the equations has a
different number of the unknowns in it,
 making the equations linearly independent. So there are solutions for $Q$, giving  $\P^{m}\subseteq D(\P^{m+1})$, as
required, and we have proved (i).\\
By iteration, (i) gives $D^{m+1}(\P^m)=\{0\}$ for all $m\in\mathbb N$ and the 
backward implication of (ii) follows. The  forward implication is
certainly true for
$m=0$; we suppose it true for $m=k\in\mathbb N$. Consider $a\in\A$ and suppose $D^{k+2} a=0$.
Then $D^{k+1}(Da)=0$ and, by hypothesis, there exists  $P\in\P^k$ with $Da=P|_{\mathbb N}$. By (i), there
exists $Q\in\P^{k+1}$ with $P=DQ$. Then $Da=(DQ)|_{\mathbb N}=D(Q|_{\mathbb N})$, giving 
$D(a-Q|_{\mathbb N})=0$, and then there is a constant function $C:\R\rightarrow\R$ such that
$a-Q|_{\mathbb N}=C|_{\mathbb N}$. So $a=(Q+C)|_{\mathbb N}$. Since $Q+C\in\P^{k+1}$,
the result follows by induction. Uniqueness of $P$ is ensured by the
fact that a polynomial function is fully determined by its values on
an infinite set.
\end{proof}

\begin{Corollary}\label{polyTheorem}
A sequence $a$ is an element of $\A_{m,p}$ if, and only if, there exists $P\in \P^{m-1}$ such that $P|_\N= a^p$ 
(that is, $P(n)=a_n^p$ for all $n\in \N$); in this case, the polynomial function $P$
is uniquely determined by the equation $P|_\N= a^p$.
\end{Corollary}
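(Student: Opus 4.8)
The plan is to derive this Corollary directly from Proposition \ref{altlemm}(ii) by translating the definition of $\A_{m,p}$ into the language of finite differences. By definition, $a\in\A_{m,p}$ means precisely that $a\in\A^+$ and $D^m a^p=0$. Since $a\in\A^+$, the sequence $a^p$ is a well-defined element of $\A$, so I can apply the equivalence in Proposition \ref{altlemm}(ii) to the sequence $a^p$.

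The key step is to match indices correctly. Proposition \ref{altlemm}(ii) states that $D^{m+1}b=0$ if and only if $b=P|_\N$ for some $P\in\P^m$. Setting $b=a^p$ and replacing $m$ by $m-1$ (valid since $m\in\N^+$ guarantees $m-1\in\N$), this reads: $D^m a^p=0$ if and only if there exists $P\in\P^{m-1}$ with $a^p=P|_\N$. Thus $a\in\A_{m,p}$ if and only if $a\in\A^+$ and such a $P\in\P^{m-1}$ exists. I would note that the condition $a\in\A^+$ is subsumed once $a^p=P|_\N$, because the existence of the sequence $a^p$ already presupposes $a\in\A^+$; conversely, any $a\in\A^+$ is automatically a candidate, so the nonnegativity plays no further role in the equivalence beyond ensuring $a^p$ makes sense.

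For uniqueness, I would invoke the final sentence of Proposition \ref{altlemm}(ii): at most one real polynomial function $P$ satisfies $a^p=P|_\N$, since a polynomial function is determined by its values on the infinite set $\N$. This gives the uniqueness claim immediately.

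I do not anticipate any genuine obstacle here; the Corollary is essentially a restatement of Proposition \ref{altlemm}(ii) under the substitution $b\mapsto a^p$ and $m\mapsto m-1$, combined with unwinding the definition of $\A_{m,p}$. The only point requiring mild care is the index shift between the $D^{m+1}$ in the Proposition and the $D^m$ in the definition of $\A_{m,p}$, and the observation that the positivity constraint $a\in\A^+$ is exactly what makes the expression $a^p$ meaningful.
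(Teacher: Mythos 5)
Your proposal is correct and matches the paper's intent exactly: the paper states this Corollary without proof precisely because it follows immediately from Proposition~\ref{altlemm}(ii) applied to the sequence $a^p$ with the index shift $m\mapsto m-1$, which is the substitution you carry out, including the uniqueness assertion taken from the final sentence of that proposition. Your added remark about the role of positivity (only ensuring $a^p$ is well defined) is a harmless clarification, not a deviation.
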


\begin{Remark}\label{hatrmk}
It is immediate from this characterisation that $a\in \A_{m,p}$ implies $a\in \A_{m+1,p}$.  Thus $(\A_{n,p})_{n \in \N}$
is an increasing sequence of sets and 
$\A_{m,p}= \widehat{\A}_{m,p}\ \dot{\cup} \ \widehat{\A}_{m-1,p}\ \dot{\cup} \ \cdots
\ \dot{\cup} \ \widehat{\A}_{1,p} \ \dot{\cup} \ \{0\}$.
\end{Remark}

\begin{Corollary}\label{prop:strictdegree}
Suppose $a\in \A_{m,p}$ and let $P\in \P^{m-1}$ be the unique polynomial determined by $P|_\N= a^p$ in
\ref{polyTheorem}. Then $a\in \widehat{\A}_{d+1,p}$ where $d$ is the degree of $P$. Conversely, if 
$a\in \widehat{\A}_{m,p}$, the degree of $P$ is $m-1$.  Moreover, $P$ has positive leading coefficient.
\end{Corollary}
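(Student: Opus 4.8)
The plan is to read all three assertions off the interpolation picture provided by Corollary \ref{polyTheorem} and Proposition \ref{altlemm}(ii). I would first dispose of the trivial case $a=0$, where $P=0$, so that henceforth $P\neq 0$ and the degree $d=\deg P\ge 0$ is genuinely defined. For the first claim, from $P|_\N=a^p$ with $P\in\P^{d}$, Proposition \ref{altlemm}(ii) immediately yields $D^{d+1}a^p=0$, that is $a\in\A_{d+1,p}$. The strictness $D^{d}a^p\neq 0$ is the crux: if it were to fail, Proposition \ref{altlemm}(ii) would produce some $Q\in\P^{d-1}$ with $Q|_\N=a^p$, and then $P=Q$ by the uniqueness clause of \ref{polyTheorem}, contradicting $\deg P=d>\deg Q$. (When $d=0$ I would argue directly, since there $D^{0}a^p=a^p\neq 0$.) This gives $a\in\widehat{\A}_{d+1,p}$.

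For the converse, I would take $a\in\widehat{\A}_{m,p}$, so that $D^{m-1}a^p\neq 0$, while Corollary \ref{polyTheorem} already forces $\deg P\le m-1$. If instead $\deg P\le m-2$, then $P\in\P^{m-2}$ and Proposition \ref{altlemm}(ii) would give $D^{m-1}a^p=0$, a contradiction; hence $\deg P=m-1$. (Alternatively this is immediate from the first part together with the disjointness of the family $(\widehat{\A}_{n,p})_{n}$ recorded in Remark \ref{hatrmk}.)

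The sign of the leading coefficient I would obtain from positivity: since $a\in\A^+$ and $p>0$, we have $P(n)=a_n^p\ge 0$ for every $n\in\N$, so if $c$ denotes the leading coefficient of $P$ then $c=\lim_{n\to\infty}P(n)/n^{d}\ge 0$, while $c\neq 0$ because $\deg P=d$; therefore $c>0$. I expect the only genuinely delicate step to be the strictness argument of the first part, where the uniqueness of the interpolating polynomial is precisely what rules out a lower degree; the remaining assertions then follow immediately from the machinery already in place.
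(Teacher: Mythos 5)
Your proof is correct and follows essentially the same route as the paper, which simply declares the first two assertions ``a clear corollary of our notation'' and obtains the positive leading coefficient from the fact that $a^p$ is a positive sequence interpolated on $\N$ by $P$. Your write-up merely fills in the details the paper leaves implicit: the uniqueness clause of Corollary~\ref{polyTheorem} ruling out a lower-degree interpolant (strictness), and the limit $\lim_{n\to\infty}P(n)/n^{d}$ formalizing the positivity of the leading coefficient.
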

\begin{proof}
The first parts are a clear corollary of our notation. That the leading coefficient of $P$ is positive follows from the
fact that $a^p$ is a positive sequence interpolated on $\N$ by $P$.
\end{proof}

\begin{Remark}\label{conversion}
Notice that a linear operator $T\in B(X)$ is an $(m,p)$-isometry if, and only if, for every $x\in X$, 
$(\norm{T^n x})_{n \in \N} \in \A_{m,p}$. Moreover, $T$ is a strict $(m,p)$-isometry if, in addition, 
for some $x_0\in X$, $(\norm{T^n x_0})_{n \in \N} \in \widehat{\A}_{m,p}$.
\end{Remark}

These facts allow us to retrieve most of the basic properties of $(m,p)$-isometries in an elementary way. For example,
Remark~\ref{hatrmk} shows that an $(m,p)$-isometry is an $(m+1,p)$ isometry.
Proposition~\ref{power bounded} follows from the fact that a polynomial which is bounded on $\N$ is constant.
We can give a simple unified proof for the following reproducing formulae for an $(m,p)$-isometry. Part (i) essentially
appears as \cite[Theorem~2.1]{bemarmart} while part (ii), as already mentioned, appears in \cite[Proposition~2.1]{ba}.

\begin{Proposition}
Let $T \in B(X)$ be an $(m,p)$-isometry.  Then for all $n\in \N^+$ with $n\geq m$ and for all $x \in X$: 
\begin{enumerate}[(i)]
  \item 
  	$\displaystyle \norm {T^n x}^p = \sum_{k=0}^{m-1} (-1)^{m-1-k}\binom {n}k\binom{n-1-k}{m-1-k} \norm{T^k x}^p$,
  \item 
  	$\displaystyle \norm {T^n x}^p = \sum_{k=0}^{m-1}n^{(k)}\beta_k^{(p)}(T,x)$,
  \item 
  	$\displaystyle \norm {T^n x}^p = \frac{\sum_{k=0}^{m-1}(-1)^{m-1-k}\binom{m-1}k \frac1{n-k}\norm{T^kx}^p}
     {\sum_{k=0}^{m-1}(-1)^{m-1-k}\binom{m-1}k \frac1{n-k}}$.
\end{enumerate}
\end{Proposition}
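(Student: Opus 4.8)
The plan is to treat all three formulae as instances of a single principle. For a fixed $x$, the sequence $(\norm{T^n x})_{n\in\N}$ lies in $\A_{m,p}$ by Remark~\ref{conversion}, so Corollary~\ref{polyTheorem} furnishes a unique $P_x\in\P^{m-1}$ with $P_x(n)=\norm{T^n x}^p$ for every $n\in\N$; in particular $P_x(k)=\norm{T^k x}^p$ for $k=0,\dots,m-1$. Since $\deg P_x\le m-1$, the value $P_x(n)$ is determined by the $m$ values $P_x(0),\dots,P_x(m-1)$, and each of the three right-hand sides is merely a different closed form for this reconstruction. Thus it suffices to prove that each identity holds for \emph{every} polynomial $P$ of degree at most $m-1$, after which one specialises to $P=P_x$ and reads off $\norm{T^nx}^p=P_x(n)$.

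For (ii) there is nothing further to do: this is precisely the Newton forward-difference expansion already recorded as (\ref{poly}) (equivalently, the truncation of (\ref{general poly})), since $\beta_k^{(p)}(T,x)$ is, up to the factor $1/k!$, the $k$-th finite difference of the data $(\norm{T^jx}^p)_j$. For (i), I would show that the functions $c_k(n):=(-1)^{m-1-k}\binom nk\binom{n-1-k}{m-1-k}$ are exactly the Lagrange basis for the nodes $0,1,\dots,m-1$. Each $c_k$ is a polynomial in $n$ of degree $k+(m-1-k)=m-1$, and a short computation with generalised binomial coefficients gives $c_k(j)=\delta_{kj}$ for $j\in\{0,\dots,m-1\}$: for $k>j$ the factor $\binom jk$ vanishes, for $k<j$ the factor $\binom{j-1-k}{m-1-k}$ vanishes (its lower index exceeds its nonnegative upper index), and for $k=j$ one uses $\binom{-1}{m-1-j}=(-1)^{m-1-j}$ to get $c_j(j)=1$. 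Hence $\sum_{k=0}^{m-1}c_k(n)P(k)$ is a polynomial of degree $\le m-1$ agreeing with $P(n)$ at the $m$ nodes, so the two coincide identically.

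For (iii) I would recognise the right-hand side as the second (barycentric) form of Lagrange interpolation at the nodes $0,1,\dots,m-1$. In product form the Lagrange basis is $L_k(n)=\prod_{j\neq k}\frac{n-j}{k-j}$, which by (i) coincides with $c_k$ and factors as $L_k(n)=\ell(n)\,w_k/(n-k)$, where $\ell(n)=\prod_{j=0}^{m-1}(n-j)$ and $w_k=\prod_{j\neq k}(k-j)^{-1}=(-1)^{m-1-k}\binom{m-1}{k}/(m-1)!$. Applying the Lagrange formula both to $P$ and to the constant polynomial $1$ gives $P(n)=\ell(n)\sum_k w_k P(k)/(n-k)$ and $1=\ell(n)\sum_k w_k/(n-k)$; dividing the first by the second cancels $\ell(n)$ together with the common factor $1/(m-1)!$, yielding exactly (iii). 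The hypothesis $n\ge m$ is used precisely here, to guarantee $n-k\neq0$ for $k\in\{0,\dots,m-1\}$ so that the quotient is defined, and to ensure that its denominator, which equals $(m-1)!/\ell(n)$, is nonzero.

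The only genuine obstacle is the combinatorial bookkeeping in (i): identifying the explicit binomial coefficients with the Lagrange basis, where the boundary cases of the generalised binomial coefficients (especially $\binom{-1}{r}=(-1)^r$ and the vanishing of $\binom{j-1-k}{m-1-k}$) must be handled with care. Everything else is standard interpolation theory, and the restriction to polynomials of degree $\le m-1$ makes the ``agreement at $m$ nodes'' argument immediate.
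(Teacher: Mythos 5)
Your proposal is correct and takes essentially the same route as the paper: both identify $(\norm{T^n x}^p)_{n\in\N}$ with the restriction to $\N$ of a unique polynomial $P\in\P^{m-1}$ (via Remark~\ref{conversion} and Corollary~\ref{polyTheorem}), observe that $P$ is therefore the Lagrange interpolant of the nodes $0,1,\ldots,m-1$, and read off (i), (ii) and (iii) as its normal (Lagrange), Newton and barycentric forms respectively. The only difference is that you carry out the binomial-coefficient and barycentric-weight verifications explicitly, which the paper leaves implicit.
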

\begin{proof}
As $T$ is an $(m,p)$-isometry the sequence $a_n = \norm{T^n x}^p$ is interpolated by some polynomial $P\in \P^{m-1}$.
Evidently, $P$ must also be the (unique) Lagrange polynomial of degree less than or equal to $m-1$ which interpolates
$\{(k, a_k) \mid k=0,1,\ldots,m-1\}$. Using the normal form of the Lagrange polynomial to calculate $P(n)$ yields (i),
using the Newton form yields (ii), while using the barycentric form we get (iii).
\end{proof}

As one more example, we give an alternative derivation of
Proposition~\ref{inv} which states that if $T\in B(X)$ is an invertible $(m,p)$-isometry and
$m$ is even, then $T$ is an $(m-1,p)$-isometry.

\begin{proof}[Proof (of Proposition~\ref{inv})]
Fix $x\in X$ and choose $P\in \P^{m-1}$ with $P|_\N=(\norm{T^n x}^p)_{n\in \N}$.  Since $T$ is invertible, we actually
have $P|_{\mathbb{Z}}= (\norm{T^n x}^p)_{n\in \mathbb Z}$.  (Indeed, if $l$ is a negative integer then
the sequence $(\norm{T^{n}(T^lx)}^p)_{n\in \N}$ is interpolated on $\N$ by a polynomial $Q\in \P^{m-1}$ but this
sequence has $(\norm{T^n x}^p)_{n\in \N}$ as a subsequence and so, by uniqueness of polynomials, $Q$ must be a
translation of $P$, that is $Q(t)=P(t+l)$ for all $t\in \R$ and so $\norm{T^l x}^p = Q(0)= P(l)$ as claimed.)
In particular, $P$ is positive at every (negative) integer and thus cannot have odd degree.  Since $P\in \P^{m-1}$ and
$m-1$ is odd, we actually have $P\in \P^{m-2}$.  As $x$ was arbitrary in $X$, it follows that $T$ is an 
$(m-1,p)$-isometry.
\end{proof}

\section{$(m, p)$- and $(\mu, q)$-isometries}

We are interested in determining when an $(m,p)$-isometry is simultaneously a $(\mu,q)$-isometry, where 
$m, \mu \in \N^+$ and $p, q \in (0, \infty)$.  We have already seen that an $(m,p)$-isometry is an
$(m+k,p)$-isometry, for all $k \in \mathbb{N}$ and so it is natural to rephrase the problem in terms of strict 
$(m,p)$-isometries.  We begin with the following.

\begin{Proposition}\label{formula for q}
Suppose $m,\mu\in\N^+$ and $p,q \in (0,\infty)$ and $\widehat{\A}_{m,p}\cap\widehat{\A}_{\mu, q}\ne\emptyset$. 
Then $(m-1)q = (\mu-1)p$.
\end{Proposition}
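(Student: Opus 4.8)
The plan is to pass to the interpolating polynomials guaranteed by Corollary~\ref{prop:strictdegree} and to compare their leading-order growth. Suppose $a$ lies in the nonempty intersection $\widehat\A_{m,p}\cap\widehat\A_{\mu,q}$. By Corollary~\ref{polyTheorem} there are (unique) polynomials $P,Q$ with $P|_\N=a^p$ and $Q|_\N=a^q$, and by Corollary~\ref{prop:strictdegree} the degree of $P$ is exactly $m-1$ and that of $Q$ is exactly $\mu-1$, each with positive leading coefficient. The single algebraic fact linking the two is that $a^q=(a^p)^{q/p}$ pointwise, so that $Q(n)=P(n)^{q/p}$ for every $n\in\N$.

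Next I would read off the asymptotics. Writing the positive leading coefficients of $P$ and $Q$ as $c$ and $c'$, we have $P(n)/n^{m-1}\to c>0$ and $Q(n)/n^{\mu-1}\to c'>0$ as $n\to\infty$. Note that $P(n),Q(n)>0$ for all large $n$, so the powers below are defined; this is because both leading coefficients are positive and $a$ is not the zero sequence (the strictness condition $D^{m-1}a^p\ne 0$ forbids it). Raising the first limit to the power $q/p$ gives $P(n)^{q/p}/n^{(m-1)q/p}\to c^{q/p}>0$.

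Finally I would combine these. Since $Q(n)=P(n)^{q/p}$, dividing the two limits yields
\[
n^{(m-1)q/p-(\mu-1)}=\frac{Q(n)}{n^{\mu-1}}\Big/\frac{P(n)^{q/p}}{n^{(m-1)q/p}}\longrightarrow \frac{c'}{c^{q/p}},
\]
a finite, strictly positive number. A power function $n\mapsto n^{\alpha}$ converges to a finite positive limit only when $\alpha=0$; hence the exponent $(m-1)q/p-(\mu-1)$ must vanish, and rearranging gives $(m-1)q=(\mu-1)p$, as required.

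The argument is short and I expect no serious obstacle; the only point requiring a little care is to confirm that $P$ and $Q$ are eventually positive, so that the identity $Q(n)=P(n)^{q/p}$ and the subsequent limit manipulations are legitimate. This is exactly what the positive-leading-coefficient clause of Corollary~\ref{prop:strictdegree} supplies. One could equally phrase the comparison logarithmically, via $\log Q(n)/\log n\to\mu-1$ and $\log P(n)/\log n\to m-1$ together with $\log Q(n)=(q/p)\log P(n)$, which avoids introducing the constants $c$ and $c'$ altogether.
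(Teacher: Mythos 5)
Your proof is correct and is essentially the paper's own argument: both pass to the interpolating polynomials $P$ and $Q$ of exact degrees $m-1$ and $\mu-1$ (via Corollaries~\ref{polyTheorem} and~\ref{prop:strictdegree}) and force the exponents to agree by comparing asymptotic growth, the paper raising the two positive limits to the powers $q$ and $p$ to obtain the common quantity $a_n^{pq}$ over $n^{q(m-1)}$ and $n^{p(\mu-1)}$, while you raise one limit to the power $q/p$ --- the same computation in slightly different clothing. The only (harmless) difference is that your limit argument handles $m=1$ uniformly, whereas the paper disposes of that case separately before assuming $m,\mu>1$.
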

\begin{proof}
Suppose $a\in\widehat{\A}_{m,p}\cap\widehat{\A}_{\mu, q}$. From Corollary \ref{polyTheorem} we know that there exist
polynomials $P$ and $Q$ with $P|_{\mathbb{N}}=a^p$ and $Q|_{\mathbb{N}}=a^q$, and (by~\ref{prop:strictdegree}) that
these polynomials have degree $m-1$ and $\mu-1$ respectively. Clearly $m=1$ forces $\mu=1$ and vice versa. So assume now
$m, \mu >1$. Then $\lim_{n\to\infty}a_n^p/n^{m-1}$ exists and is positive, being the leading
coefficient of $P$ and, similarly, $\lim_{n\to\infty}a_n^q/n^{\mu-1}$ exists and is positive (being the leading
coefficient of $Q$). Therefore the limits $\lim_{n\to\infty}a_n^{pq}/n^{q(m-1)}$ and $\lim_{n\to\infty}a_n^{pq}/n^{p(\mu-1)}$ both
exist and are positive, forcing the powers of $n$ in their denominators to be equal.
\end{proof}

\begin{Lemma}\label{gcdprop}
Suppose $a \in \widehat{\A}_{m,p}$ and $a \in \widehat{\A}_{\mu,q}$, with $m,\mu>1$. 
Then $a\in\widehat{\A}_{d+1,r}$, where $d=\gcd(m-1,\mu-1)$ and $r=\frac{d}{(m-1)} p$.
\end{Lemma}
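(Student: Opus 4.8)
The plan is to translate the hypothesis into a statement about the interpolating polynomials supplied by Corollary \ref{polyTheorem} and then exploit unique factorisation in $\R[t]$. First I would invoke Corollaries \ref{polyTheorem} and \ref{prop:strictdegree} to obtain real polynomials $P,Q$ with positive leading coefficients such that $P|_\N=a^p$ and $Q|_\N=a^q$, of degrees exactly $m-1$ and $\mu-1$ respectively. Writing $M=m-1$ and $N=\mu-1$, Proposition \ref{formula for q} gives $Mq=Np$; setting $d=\gcd(M,N)$ and $M=dM'$, $N=dN'$ with $\gcd(M',N')=1$, this reads $q/p=N'/M'$, equivalently $pN'=qM'$. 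Since $r=\frac{d}{m-1}p=p/M'$, the target is to show that $a_n^r=a_n^{p/M'}$ is the restriction to $\N$ of a polynomial of degree exactly $d$.

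The key observation is that $P(n)^{N'}=a_n^{pN'}=a_n^{qM'}=Q(n)^{M'}$ for every $n\in\N$, so, agreeing on an infinite set, $P^{N'}=Q^{M'}$ as an identity in $\R[t]$. Since $\gcd(M',N')=1$, a standard valuation argument in the unique factorisation domain $\R[t]$ produces a monic $R\in\R[t]$ and nonzero constants with $P=\alpha R^{M'}$ and $Q=\beta R^{N'}$; comparing leading coefficients forces $\alpha,\beta>0$, and a degree count gives $\deg R=d$. Absorbing $\alpha^{1/M'}>0$ into $R$ and using $P^{N'}=Q^{M'}$ to pin down the remaining constant, I can arrange $P=R^{M'}$ and $Q=R^{N'}$ with $R$ of degree $d$ and positive leading coefficient.

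It then remains to identify $a_n^r$ with $R(n)$, and here the delicate point — the main obstacle — is a sign issue: $a_n^r\ge 0$ and $(a_n^r)^{M'}=a_n^p=R(n)^{M'}$ only force $a_n^r=\lvert R(n)\rvert$ when $M'$ is even. To resolve this I would use that $M'$ and $N'$ are coprime and hence not both even: choosing whichever of them is odd, say $N'$ odd, the identity $R(n)^{N'}=a_n^q\ge 0$ forces $R(n)\ge 0$ for all $n\in\N$ (an odd power preserves sign), and symmetrically if $M'$ is odd. With $R(n)\ge 0$ established, taking $M'$-th roots of the nonnegative quantities in $(a_n^r)^{M'}=R(n)^{M'}$ yields $a_n^r=R(n)$ for every $n$.

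Finally, since $a^r=R|_\N$ with $R$ a polynomial of degree exactly $d$, Corollary \ref{polyTheorem} gives $a\in\A_{d+1,r}$ and Corollary \ref{prop:strictdegree} upgrades this to $a\in\widehat{\A}_{d+1,r}$, as required; one checks $r=\frac{d}{m-1}p$ directly from $r=p/M'$ and $M'=(m-1)/d$. Any zero terms of the sequence cause no trouble, since $a_n=0$ forces $R(n)=0$ and all identities above persist.
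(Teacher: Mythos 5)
Your proof is correct and follows essentially the same route as the paper's: both pass to the interpolating polynomials $P,Q$ of degrees $m-1$ and $\mu-1$, deduce a polynomial identity from agreement on $\N$ (the paper uses $P^{\mu-1}=Q^{m-1}$, you use the coprime-exponent version $P^{N'}=Q^{M'}$), and then use unique factorisation in $\R[t]$ to exhibit $P$ as the $((m-1)/d)$-th power of a degree-$d$ polynomial $R$, from which \ref{polyTheorem} and \ref{prop:strictdegree} give $a\in\widehat{\A}_{d+1,r}$. The only substantive difference is in your favour: by also writing $Q=R^{N'}$ and using that $M'$ and $N'$ cannot both be even, you justify $R(n)\ge 0$ on $\N$ before taking $M'$-th roots, whereas the paper passes directly from $P=R^{(m-1)/d}$ to $a^{pd/(m-1)}=R|_{\N}$ and leaves this sign check (which is genuinely needed when $(m-1)/d$ is even) implicit.
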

\begin{proof}
Let $P$ and $Q$ be the polynomials of degree $m-1$ and $\mu-1$ respectively 
(\ref{polyTheorem}, \ref{prop:strictdegree}) that satisfy $P|_{\mathbb{N}}=a^p$ and $Q|_{\mathbb{N}}=a^q$. Then
$P^{\mu-1}|_{\mathbb{N}}=a^{p(\mu-1)}$ and $Q^{m-1}|_{\mathbb{N}}=a^{q(m-1)}$, so that, since $p(\mu-1)=q(m-1)$
by \ref{formula for q}, we have $P^{\mu-1}|_{\mathbb{N}}=Q^{m-1}|_{\mathbb{N}}$ and then $P^{\mu-1}=Q^{m-1}$, because a
polynomial is fully determined by its values on $\mathbb N$.  So $P$ and $Q$ have the same zeroes. If a zero
has multiplicity $\xi$ in $P$ and $\zeta$ in $Q$, then $\zeta(m-1)=\xi(\mu-1)$, so that the integer 
$(m-1)/d$ divides $\xi$. This is true for all zeroes of $P$, so $P=R^{(m-1)/d}$
where $R$ is a polynomial of degree $d$. Then $a^{p d/(m-1)}=R|_{\mathbb N}$ and \ref{polyTheorem} and
\ref{prop:strictdegree} give $a\in\widehat{\A}_{d+1,r}$.
\end{proof}

\begin{Proposition}\label{kp}
Suppose $a\in\A^+$. Then exactly one of the following occurs:
\begin{enumerate}[(i)]
	\item
		$\hat\rho(a)=\rho(a)=\emptyset$;
	\item
		$\hat\rho(a)=\{1\}\times(0,\infty)$ and $\rho(a)=\N^+\times(0,\infty)$;
	\item
		there exist unique $m_0\in\N\backslash\{0,1\}$ and $p_0\in(0,\infty)$ such that\\
		$\hat\rho(a)=\{(k(m_0-1)+1,kp_0)\mid k\in\N^+\}$ and\\
		$\rho(a)=\{(m,kp_0)\mid  k\in\N^+, m>k(m_0-1)\}$. 
\end{enumerate}
\end{Proposition}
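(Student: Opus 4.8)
The plan is to convert the whole statement into the language of interpolating polynomials via Corollaries \ref{polyTheorem} and \ref{prop:strictdegree}, and then to read both sets off from the arithmetic of the attainable degrees. Call $p\in(0,\infty)$ \emph{admissible} (for the fixed $a$) if $a^p=P|_\N$ for some real polynomial $P$; by \ref{polyTheorem} this $P$ is then unique, and \ref{prop:strictdegree} gives, for admissible $p$, that $(m,p)\in\hat\rho(a)$ iff $\deg P=m-1$ and $(m,p)\in\rho(a)$ iff $m-1\ge\deg P$, while for non-admissible $p$ neither set contains any $(m,p)$. I would first dispose of two degenerate cases, assuming throughout that $a\ne 0$ (the zero sequence, which lies in every $\A_{m,p}$ but no $\widehat{\A}_{m,p}$ by Remark \ref{hatrmk}, is genuinely exceptional and must be set aside), so that $a^p$ is a nonzero sequence and $\deg P\ge 0$ whenever $p$ is admissible. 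If no $p$ is admissible, then both sets are empty, which is (i); conversely a single admissible $p$ already places $(\deg P+1,p)$ in $\hat\rho(a)$, so the two sets are empty together. If some admissible $p$ has $\deg P=0$, then $a^p$ is a nonzero constant, hence $a$ is a nonzero constant sequence, whence every $p$ is admissible with degree $0$; reading off the two descriptions yields $\hat\rho(a)=\{1\}\times(0,\infty)$ and $\rho(a)=\N^+\times(0,\infty)$, which is (ii).

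There remains the case in which at least one $p$ is admissible and every admissible $p$ has $\deg P\ge 1$; this will produce (iii). Here I would first invoke Proposition \ref{formula for q}: for admissible $p,q$ the interpolating degrees satisfy $q\deg P_p=p\deg P_q$, so the ratio $\gamma:=\deg P_p/p$ is a positive constant independent of the admissible $p$, and thus $\deg P_p=\gamma p$ for every admissible $p$; in particular $p\mapsto\deg P_p$ is injective on the admissible set. Let $D:=\{\deg P_p\mid p\text{ admissible}\}\subseteq\N^+$. Two closure properties control $D$: it is closed under $\gcd$, since Lemma \ref{gcdprop} produces from $d_1,d_2\in D$ an admissible $r$ with $\deg P_r=\gcd(d_1,d_2)$; and it is closed under multiplication by positive integers, since $a^p=P_p|_\N$ forces $a^{kp}=P_p^{\,k}|_\N$ with $\deg P_p^{\,k}=k\deg P_p$, so $kd\in D$ whenever $d\in D$ and $k\in\N^+$.

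To finish I would determine $D$ exactly. As its elements are positive integers, the non-increasing partial gcds of any enumeration of $D$ stabilise, so $d_0:=\gcd(D)$ is the gcd of finitely many elements and hence lies in $D$ by $\gcd$-closure; since $d_0$ divides every element, $d_0=\min D$ and $D\subseteq d_0\N^+$, while multiplicative closure gives $d_0\N^+\subseteq D$, so $D=d_0\N^+$. Setting $m_0:=d_0+1\ge 2$ and $p_0:=d_0/\gamma$, the relations $\deg P_p=\gamma p$ and $D=d_0\N^+$ give $\hat\rho(a)=\{(d+1,d/\gamma)\mid d\in D\}=\{(k(m_0-1)+1,kp_0)\mid k\in\N^+\}$, and, since $(m,p)\in\rho(a)$ exactly when $p$ is admissible and $m>\deg P_p$, also $\rho(a)=\{(m,kp_0)\mid k\in\N^+,\ m>k(m_0-1)\}$. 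Uniqueness of $(m_0,p_0)$ is then immediate, as $m_0-1=\min\{m-1\mid(m,p)\in\hat\rho(a)\}$ and $p_0$ is the matching second coordinate.

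The main obstacle is precisely this determination of $D$ in the last case. One must marshal Proposition \ref{formula for q} to make the degree linear in $p$, Lemma \ref{gcdprop} for $\gcd$-closure, and the elementary power trick for multiplicative closure, and then argue that $\gcd$-closure together with the well-ordering of $\N$ forces $\gcd(D)\in D$. It is this last step that upgrades mere $\gcd$-closure to the statement that $D$ consists of all positive multiples of its least element, which is what makes the two sets collapse to the clean arithmetic-progression form asserted in (iii).
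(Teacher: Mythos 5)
Your proof is correct and follows essentially the same route as the paper's: after translating both sets into interpolating-polynomial degrees via \ref{polyTheorem} and \ref{prop:strictdegree} and disposing of the empty and constant cases, you obtain (iii) from exactly the same three ingredients the paper uses --- Proposition \ref{formula for q} to tie the exponents together, the power trick $a^{kp}=P^k|_{\N}$ for one inclusion, and Lemma \ref{gcdprop} plus a minimality argument (your $\gcd(D)\in D$ versus the paper's least $m_0$) for the other --- so your degree set $D=d_0\N^+$ is a repackaging rather than a new idea. Your one genuine addition is the explicit exclusion of the zero sequence, and you are right to make it: for $a=0$ one has $\rho(a)=\N^+\times(0,\infty)$ while $\hat\rho(a)=\emptyset$, so the stated trichotomy fails there, a case the paper's opening assertion that $\rho(a)=\emptyset$ if and only if $\hat\rho(a)=\emptyset$ silently overlooks.
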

\begin{proof}
It is evident that $\rho(a)$ is empty if and only if $\hat\rho(a)$ is empty and
that (ii) occurs if, and only if, $a$ is a constant sequence, so we suppose that $a$ is not constant and that
$\hat\rho(a)\ne\emptyset$ and show that (iii) occurs.\\
Let $m_0$ be the least integer such that $(m_0,p_0)\in\hat\rho(a)$ for some $p_0\in(0,\infty)$. Then, since $a$ is not
constant, $m_0>1$ and $p_0$ is unique by \ref{formula for q}.  Now $a \in \widehat{\A}_{m_0,p_0}$ and, by
\ref{prop:strictdegree}, the real polynomial $P$ with $P|_\N=a^{p_0}$ satisfies $\deg P = m_0-1$. Then for each
$k \in \mathbb{N}^+$, $a^{kp_0}=P^k|_\N$, with $\deg P^k = k(m_0-1)$ and we invoke \ref{polyTheorem} and
\ref{prop:strictdegree} again to get $\{(k(m_0-1)+1,kp_0)\mid k\in\N^+\}\subseteq \hat\rho(a)$. If conversely 
$(\mu,q) \in \hat\rho(a)$, by \ref{gcdprop} $(d+1,r) \in \hat\rho(a)$ for $d = \gcd(m_0-1,\mu-1)$. The minimality of
$m_0$ forces $d=m_0-1$ and then there exists a $k \in \N^+$ with $\mu-1=k(m_0-1)$. The form of $q$ follows from
\ref{formula for q}. So $\hat\rho(a)=\{(k(m_0-1)+1,kp_0)\mid k\in\N^+\}$, as required. Observing \ref{hatrmk}, we deduce
easily that $\rho(a)=\{(m,kp_0)\mid  k\in\N^+, m>k(m_0-1)\}$.
\end{proof}

Our aim is now to translate these results into the language of $(m,p)$-isometric operators by considering the sequences
$(\norm{T^nx}^p)_{n \in \N}$ for a given operator $T$ and $x\in X$. For $m \in \N^+$ and $p \in (0, \infty)$, it is
useful to define the subsets $X^T_{m,p}$ of $X$ by 
$X^T_{m,p}:=\{x\in X \mid (\norm{(T^nx)})_{n \in \N} \in \widehat\A_{m,p}\}$. Let's note some basic properties of these
sets before we state a fundamental decomposition theorem for $(m,p)$-isometric operators.

\begin{Lemma}\label{equpart}
Suppose  $m,\mu\in\N^+$ and $p,q\in(0,\infty)$.
Then, with reference to the operator $T \in B(X)$,
\begin{enumerate}[(i)]
	\item
		$X^T_{1,p}=X^T_{1,q}$;
	\item
		$X^T_{m,p}\subseteq X^T_{k(m-1)+1,kp}$ for all $k\in\N^+$;
	\item
		if $X^T_{m,p}\cap X^T_{\mu,q}$ is not empty, then $(m-1)q=(\mu-1)p$ and, provided $m,\mu>1$, 
		$X^T_{m,p}\cap X^T_{\mu,q}=X^T_{d+1,r}$, where $d=\gcd(m-1,\mu-1)$ and 
		$r={d\over m-1}p={d\over \mu-1}q$;
	\item
		if $m>1$ and $X^T_{m,p}\cap X^T_{m,q}\ne\emptyset$, then $p=q$.
\end{enumerate}
\end{Lemma}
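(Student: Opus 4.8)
The plan is to translate each statement about the sets $X^T_{m,p}$ into the corresponding statement about the membership of the single sequence $a^{(x)} := (\norm{T^n x})_{n\in\N}$ in the sets $\widehat\A_{m,p}$, and then simply invoke the sequence-level results already established in Propositions \ref{formula for q} and \ref{kp} and in Lemma \ref{gcdprop}. The key observation is that, by definition, $x\in X^T_{m,p}$ if and only if $a^{(x)}\in\widehat\A_{m,p}$; this reduces every claim to a pointwise statement about one fixed sequence, so no genuinely new operator-theoretic input is needed.

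First I would dispatch (i). A sequence $a\in\A^+$ lies in $\widehat\A_{1,p}$ precisely when $D a^p=0$ and $a^p\neq 0$, i.e.\ when $a$ is a nonzero constant sequence; and a nonzero constant sequence remains constant after raising to any positive power, so $\widehat\A_{1,p}=\widehat\A_{1,q}$ for all $p,q\in(0,\infty)$. Applying this with $a=a^{(x)}$ gives $X^T_{1,p}=X^T_{1,q}$. For (ii), fix $x\in X^T_{m,p}$, so $a:=a^{(x)}\in\widehat\A_{m,p}$. If $m=1$ then part (i) (or the constant-sequence observation) settles it; if $m>1$, then by Corollary \ref{prop:strictdegree} the polynomial $P$ with $P|_\N=a^p$ has degree exactly $m-1$, whence $P^k|_\N=a^{kp}$ has degree $k(m-1)$, and Corollaries \ref{polyTheorem} and \ref{prop:strictdegree} give $a\in\widehat\A_{k(m-1)+1,kp}$, i.e.\ $x\in X^T_{k(m-1)+1,kp}$. (This is exactly the forward inclusion already extracted in the proof of Proposition \ref{kp}.)

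For (iii), suppose $x\in X^T_{m,p}\cap X^T_{\mu,q}$, so $a^{(x)}\in\widehat\A_{m,p}\cap\widehat\A_{\mu,q}$; Proposition \ref{formula for q} immediately yields $(m-1)q=(\mu-1)p$. When $m,\mu>1$, Lemma \ref{gcdprop} gives $a^{(x)}\in\widehat\A_{d+1,r}$ with $d=\gcd(m-1,\mu-1)$ and $r=\tfrac{d}{m-1}p$ (the symmetric expression $\tfrac{d}{\mu-1}q$ coinciding by the relation just derived), establishing the inclusion $X^T_{m,p}\cap X^T_{\mu,q}\subseteq X^T_{d+1,r}$. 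The reverse inclusion $X^T_{d+1,r}\subseteq X^T_{m,p}\cap X^T_{\mu,q}$ follows from part (ii): since $(m-1)=\tfrac{m-1}{d}\,d$ and $(\mu-1)=\tfrac{\mu-1}{d}\,d$ are integer multiples of $d$, applying (ii) to a sequence in $\widehat\A_{d+1,r}$ with the multipliers $k=\tfrac{m-1}{d}$ and $k=\tfrac{\mu-1}{d}$ lands it back in $\widehat\A_{m,p}$ and $\widehat\A_{\mu,q}$ respectively (after checking that $\tfrac{m-1}{d}\cdot r=p$ and likewise for $q$). Finally (iv) is the special case $\mu=m$ of the relation $(m-1)q=(\mu-1)p$ from (iii): it forces $(m-1)q=(m-1)p$, and since $m>1$ we may cancel $m-1$ to conclude $p=q$.

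I expect no serious obstacle here, as the lemma is essentially a dictionary entry converting the sequence results into operator language; the only point requiring a little care is the \emph{reverse} inclusion in (iii), since Lemma \ref{gcdprop} supplies only one direction and one must verify that the two arithmetic normalisations ($r=\tfrac{d}{m-1}p$ and $r=\tfrac{d}{\mu-1}q$) agree and that part (ii) recovers both original memberships exactly.
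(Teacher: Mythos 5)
Your proposal is correct and follows essentially the same route as the paper: reduce each part to a statement about the sequence $(\norm{T^n x})_{n\in\N}$ and invoke Proposition~\ref{formula for q}, Lemma~\ref{gcdprop}, and the power-of-the-interpolating-polynomial argument (which the paper cites via the description of $\hat\rho(a)$ in Proposition~\ref{kp}), with the reverse inclusion in (iii) obtained from (ii) exactly as in the paper. Your added verification that $\tfrac{m-1}{d}r=p$ and $\tfrac{\mu-1}{d}r=q$ is a welcome detail the paper leaves implicit.
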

\begin{proof}
\begin{enumerate}[(i)]
	\item
		$X^T_{1,p}=\{x\in X\backslash\{0\}\mid \|T^nx\|^p=\|x\|^p, \ \forall n\in\N\}$.
	\item 
		If $m=1$, this is clear, so suppose $m>1$, let $x \in  X^T_{m,p}$ and set $a := (\norm{T^nx})_{n \in \N}$.
		Then $(m,p)\in\hat\rho(a)$ and, using the
		representation of $\hat\rho(a)$ given in \ref{kp}, it follows that $(k(m-1)+1,kp)\in\hat\rho(a)$ also, so that
		$x \in X^T_{k(m-1)+1,kp}$. So $X^T_{m,p}\subseteq X^T_{k(m-1)+1,kp}$.
	\item 
		Suppose $X^T_{m,p}\cap X^T_{\mu,q}\ne\emptyset$; then $(m-1)q=(\mu-1)p$ by \ref{formula for q} and, if 
		$m,\mu>1$, then $X^T_{m,p}\cap X^T_{\mu,q}\subseteq X^T_{d+1,r}$ by \ref{gcdprop} and the reverse inclusion is
		got from (ii).
	\item This follows from the equation $(m-1)q=(m-1)p$ in (iii). 
\end{enumerate}
\end{proof}

\begin{Theorem}\label{union}
$T \in B(X)$ is a strict $(m,p)$-isometry if, and only if, there is an increasing finite sequence 
$(\nu_i)_{1\leq i\leq n}$ in $\N^+$ with $\nu_n=m$ such that the sets $X^T_{\nu_i,p}$ are non-empty and satisfy
\begin{align*}
	X^T_{\nu_1,p} \ \dot{\cup} \ ... \ \dot{\cup} \ X^T_{\nu_n,p} \ \dot{\cup} \ \{0\} = X.
\end{align*}  
This partition is independent of the parameters, that is to say, if $T$ is also a strict $(\mu,q)$-isometry and its
associated increasing sequence is $(\lambda_i)_{1\leq i\leq n'}$, then $n=n'$ and $X^T_{\nu_i,p}=X^T_{\lambda_i,q}$ for
$1\leq i\leq n$; moreover, $\lambda_i=1+{(\nu_i-1)q\over p}$ for $1\leq i\leq n$.
\end{Theorem}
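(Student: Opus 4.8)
The plan is to view $T$ entirely through the family of positive sequences $a^x := (\norm{T^n x})_{n \in \N}$ and to use Remark~\ref{conversion} to pass between $(m,p)$-isometric behaviour of $T$ and membership of each $a^x$ in the blocks $\widehat{\A}_{j,p}$. For the forward implication I would note that if $T$ is a strict $(m,p)$-isometry then, for every nonzero $x$, the sequence $a^x$ lies in $\A_{m,p}\setminus\{0\}$; by the disjoint decomposition in Remark~\ref{hatrmk} it then lies in exactly one $\widehat{\A}_{j,p}$ with $1\le j\le m$, so $x$ belongs to a unique $X^T_{j,p}$. Collecting the indices $j$ for which $X^T_{j,p}\ne\emptyset$ into an increasing sequence $\nu_1<\cdots<\nu_n$ yields the disjoint decomposition $X^T_{\nu_1,p}\,\dot{\cup}\,\cdots\,\dot{\cup}\,X^T_{\nu_n,p}\,\dot{\cup}\,\{0\}=X$, and strictness guarantees $X^T_{m,p}\ne\emptyset$, which forces $\nu_n=m$. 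The backward implication is the routine converse: given such a partition, every nonzero $x$ lies in some $X^T_{\nu_i,p}$ with $\nu_i\le m$, so $a^x\in\A_{\nu_i,p}\subseteq\A_{m,p}$ by Remark~\ref{hatrmk}, whence $T$ is an $(m,p)$-isometry, and non-emptiness of $X^T_{m,p}=X^T_{\nu_n,p}$ gives strictness.

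The substance is the independence statement. Suppose $T$ is simultaneously a strict $(\mu,q)$-isometry with associated increasing sequence $(\lambda_j)_{1\le j\le n'}$. Fix a nonzero $x$ and let $i,j$ be the unique indices with $x\in X^T_{\nu_i,p}$ and $x\in X^T_{\lambda_j,q}$. Then $X^T_{\nu_i,p}\cap X^T_{\lambda_j,q}\ne\emptyset$, so Lemma~\ref{equpart}(iii) yields $(\nu_i-1)q=(\lambda_j-1)p$, that is $\lambda_j=1+\frac{(\nu_i-1)q}{p}$. The decisive observation is that this right-hand side depends only on the index $i$ (through $\nu_i$) and the fixed parameters, not on the particular $x$; since the values $\lambda_1<\cdots<\lambda_{n'}$ are distinct, $j$ is therefore constant as $x$ ranges over $X^T_{\nu_i,p}$. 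This gives the inclusion $X^T_{\nu_i,p}\subseteq X^T_{\lambda_j,q}$, and interchanging the roles of the two parameter pairs produces the reverse inclusion, so $X^T_{\nu_i,p}=X^T_{\lambda_j,q}$.

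It remains to package this into a matching of the two partitions. The assignment $i\mapsto j$ just constructed is injective (the $X^T_{\nu_i,p}$ are disjoint and nonempty) and surjective (the symmetric construction furnishes an inverse), hence a bijection between $\{1,\dots,n\}$ and $\{1,\dots,n'\}$, so $n=n'$. Moreover $\lambda_j=1+\frac{(\nu_i-1)q}{p}$ is strictly increasing in $\nu_i$, and both $(\nu_i)$ and $(\lambda_j)$ are increasing, so $i\mapsto j$ is order-preserving; an order-preserving bijection of $\{1,\dots,n\}$ to itself is the identity, giving $j=i$, and therefore $X^T_{\nu_i,p}=X^T_{\lambda_i,q}$ together with $\lambda_i=1+\frac{(\nu_i-1)q}{p}$, as claimed. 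I expect the only real obstacle to be isolating the key point that drives the independence argument, namely that the parameter relation of Lemma~\ref{equpart}(iii) forces the matching index $j$ to be a function of the block $X^T_{\nu_i,p}$ rather than of the individual vector $x$; once this is made explicit, the equality of the blocks and the order-preserving bijection argument are straightforward.
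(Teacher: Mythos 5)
Your proof is correct and follows essentially the same route as the paper's: the partition is obtained from Remarks~\ref{hatrmk} and \ref{conversion}, and the independence statement rests on the parameter relation $(\nu_i-1)q=(\lambda_j-1)p$ (your Lemma~\ref{equpart}(iii), the paper's Proposition~\ref{formula for q}) forcing the matching index $j$ to be constant on each block $X^T_{\nu_i,p}$, with disjointness and monotonicity completing the identification. The only difference is presentational: you package the block-matching as an explicit order-preserving bijection, while the paper argues the same points inline via symmetry and the disjointness of partitions.
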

\begin{proof}
Remarks~\ref{hatrmk} and \ref{conversion} argue the necessity of the
existence of such a partition and its sufficiency is clear from the
definitions.  Suppose now that $T$ is also $(\mu,q)$-strict.  Notice
that if  $x\in X^T_{\nu_r,p}$ then $x\in X^T_{\lambda_s,q}$ for some
$\lambda_s\in\{1,\ldots,\mu\}$ and Proposition~\ref{formula for q}
guarantees that $(\nu_r-1)q=(\lambda_s-1)p$.  In particular, we get
the same $\lambda_s$, namely $1+{(\nu_r-1)q\over p}$, for all $x\in
X^T_{\nu_r,p}$ and so $X^T_{\nu_r,p}\subseteq X^T_{\lambda_s,q}$.  By
symmetry, $X^T_{\lambda_s,q}\subseteq X^T_{\nu_t,p}$ for some $t$ and
the disjoint nature of partitions implies $t=r$ and therefore
$X^T_{\nu_r,p}=X^T_{\lambda_s,q}$.  So the two partitions are identical;
their specified orders also match because the increasing order of
$(\nu_i)_{1\leq i\leq n}$ is mirrored in the order of
$\left(1+{(\nu_i-1)q\over p}\right)_{1\leq i\leq n}$.  Therefore
$X^T_{\nu_i,p}=X^T_{\lambda_i,q}$ for $1\leq i\leq n$.
\end{proof}

We use \ref{equpart} and \ref{union} freely in the following corollaries.

\begin{Corollary}\label{multiup}
Suppose $T \in B(X)$ is  a strict $(m,p)$-isometry. Then $T$ is also a strict $(k(m-1)+1,kp)$-isometry for all
$k\in\N^+$.
\end{Corollary}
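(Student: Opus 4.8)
The plan is to translate the statement through Remark~\ref{conversion} and then read everything off the set inclusions of Lemma~\ref{equpart}, exploiting the partition supplied by Theorem~\ref{union}. Write $M:=k(m-1)+1$ and $P:=kp$; the goal is to show that $T$ is an $(M,P)$-isometry that is not an $(M-1,P)$-isometry.

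First I would apply Theorem~\ref{union} to the strict $(m,p)$-isometry $T$ to obtain an increasing sequence $(\nu_i)_{1\le i\le n}$ with $\nu_n=m$ and a partition $X^T_{\nu_1,p}\,\dot{\cup}\,\cdots\,\dot{\cup}\,X^T_{\nu_n,p}\,\dot{\cup}\,\{0\}=X$ into non-empty pieces. To each piece I apply Lemma~\ref{equpart}(ii), obtaining $X^T_{\nu_i,p}\subseteq X^T_{k(\nu_i-1)+1,\,kp}$. The decisive feature is that every piece is rescaled by the \emph{same} factor $k$, so the second parameter is $kp=P$ uniformly, with no index to reconcile. Since $\nu_i\le m$ gives $k(\nu_i-1)+1\le M$ for all $i$, Remark~\ref{hatrmk} (the sets $\A_{\cdot,P}$ increase with their first index) places each nonzero $x$ into $\A_{M,P}$; that is, $(\norm{T^nx})_{n\in\N}\in\A_{M,P}$ for every $x\in X$, so $T$ is an $(M,P)$-isometry.

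For strictness I would single out the top stratum. Since $T$ is $(m,p)$-strict, Remark~\ref{conversion} furnishes some $x_0$ with $(\norm{T^nx_0})_{n\in\N}\in\widehat\A_{m,p}$, i.e.\ $x_0\in X^T_{m,p}$; Lemma~\ref{equpart}(ii) with $\nu=m$ then gives $x_0\in X^T_{M,P}$, meaning $(\norm{T^nx_0})_{n\in\N}\in\widehat\A_{M,P}$. By the disjoint decomposition of Remark~\ref{hatrmk}, membership in $\widehat\A_{M,P}$ excludes membership in $\A_{M-1,P}$, so $T$ is not an $(M-1,P)$-isometry. Together these show $T$ is a strict $(M,P)$-isometry.

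I do not anticipate a genuine obstacle: each step is a direct invocation of an inclusion already proved. The only point needing care is bookkeeping — confirming that applying the common factor $k$ keeps the second parameter fixed at $kp$ and that the largest first index so produced, namely $k(m-1)+1=M$, arises precisely from the top non-empty piece $X^T_{m,p}$. This simultaneously guarantees that the rescaled chain of sets still exhausts $X$ (the $(M,P)$-isometry property) and that its top piece is non-empty (strictness).
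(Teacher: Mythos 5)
Your proof is correct and takes essentially the same route as the paper's: both apply Theorem~\ref{union} to get the partition $X^T_{\nu_1,p}\,\dot{\cup}\,\cdots\,\dot{\cup}\,X^T_{\nu_n,p}\,\dot{\cup}\,\{0\}=X$ and then Lemma~\ref{equpart}(ii) to get the inclusions $X^T_{\nu_i,p}\subseteq X^T_{k(\nu_i-1)+1,kp}$. The only difference is in the finish --- the paper uses disjointness to upgrade these inclusions to equalities, exhibiting the rescaled family as the same partition and re-invoking Theorem~\ref{union}, whereas you verify the definition of strictness directly from the monotonicity and disjoint-union facts in Remark~\ref{hatrmk} together with Remark~\ref{conversion}; both conclusions are valid.
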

\begin{proof}
Suppose $k\in\N^+$ and let $(\nu_i)_{1\leq i\leq n}$ be the sequence
associated with $T$ as an $(m,p)$-isometry.  We
have $X^T_{\nu_i,p}\subseteq X^T_{k(\nu_i-1)+1,kp}$ for $1\leq i\leq n$ by \ref{equpart}. 
Since $\{X^T_{\nu_i,p}\mid 0\leq i\leq n\}$ is a (disjoint) partition of $X\backslash\{0\}$, 
the inclusions are in fact not proper and $\{X^T_{k(\nu_i-1)+1,kp}\mid 1\leq i\leq n\}$ is the same partition,
making $T$ a strict $(k(m-1)+1,kp)$-isometry.
\end{proof}

\begin{Corollary}\label{gcdT}
Suppose $T \in B(X)$ is a strict $(m,p)$-isometry and a strict $(\mu,q)$-isometry. Then $(m-1)q=(\mu-1)p$ and, 
provided $m,\mu>1$, $T$ is a strict $(d+1,r)$-isometry, where $d=\gcd(m-1,\mu-1)$ and $r=\frac{d}{m-1}p$.
\end{Corollary}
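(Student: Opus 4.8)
The plan is to work entirely within the partition framework of Theorem~\ref{union} and Lemma~\ref{equpart}, where the hypotheses say precisely that $T$ admits two partitions of $X\backslash\{0\}$, one by the sets $X^T_{\nu_i,p}$ (with $\nu_n=m$) and one by the sets $X^T_{\lambda_i,q}$ (with $\lambda_{n'}=\mu$). By the uniqueness clause of Theorem~\ref{union} these are the \emph{same} partition: $n=n'$ and $X^T_{\nu_i,p}=X^T_{\lambda_i,q}$ for each $i$, with $\lambda_i=1+\frac{(\nu_i-1)q}{p}$. In particular the top pieces coincide, $X^T_{m,p}=X^T_{\mu,q}$, and this set is non-empty because $T$ is strict. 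Picking any $x$ in it, the sequence $(\norm{T^nx})_{n\in\N}$ lies in $\widehat\A_{m,p}\cap\widehat\A_{\mu,q}$, so Proposition~\ref{formula for q} delivers the first assertion $(m-1)q=(\mu-1)p$ at once.

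For the second assertion I would exhibit the required strict $(d+1,r)$-partition and appeal to the sufficiency half of Theorem~\ref{union}. Since $X^T_{\nu_i,p}=X^T_{\lambda_i,q}$, each intersection $X^T_{\nu_i,p}\cap X^T_{\lambda_i,q}$ equals $X^T_{\nu_i,p}$ and is non-empty; for every index with $\nu_i>1$, Lemma~\ref{equpart}(iii) then rewrites this piece as $X^T_{d_i+1,r_i}$, where $d_i=\gcd(\nu_i-1,\lambda_i-1)$ and $r_i=\frac{d_i}{\nu_i-1}p$, while the index with $\nu_i=1$ (if present) is handled by Lemma~\ref{equpart}(i), giving $X^T_{1,p}=X^T_{1,r}$. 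Thus the whole partition of $X\backslash\{0\}$ is re-expressed using the second parameter $r$, and it only remains to check that this is genuinely a strict $(d+1,r)$-partition.

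The crux is the arithmetic confirming that all the $r_i$ coincide with the single value $r=\frac{d}{m-1}p$ and that the largest index produced is exactly $d+1$. I would write $d=\gcd(m-1,\mu-1)$, $m-1=d\alpha$, $\mu-1=d\beta$ with $\gcd(\alpha,\beta)=1$, so that $\frac{q}{p}=\frac{\mu-1}{m-1}=\frac{\beta}{\alpha}$ by the first assertion. For a piece with $\nu_i>1$, integrality of $\lambda_i-1=\frac{(\nu_i-1)\beta}{\alpha}$ together with $\gcd(\alpha,\beta)=1$ forces $\alpha\mid(\nu_i-1)$; writing $\nu_i-1=\alpha u_i$ gives $\lambda_i-1=\beta u_i$, hence $d_i=\gcd(\alpha u_i,\beta u_i)=u_i$ and $r_i=\frac{u_i}{\alpha u_i}p=\frac{p}{\alpha}=r$. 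Since the $\nu_i$ increase, the new indices $d_i+1=\frac{d(\nu_i-1)}{m-1}+1$ increase too, and the top piece $\nu_n=m$ yields $d_n+1=d+1$. The sets $X^T_{d_i+1,r}$ therefore form an increasing, pairwise disjoint partition of $X\backslash\{0\}$ with largest index $d+1$, and Theorem~\ref{union} (sufficiency) concludes that $T$ is a strict $(d+1,r)$-isometry. The only genuine obstacle is keeping this number-theoretic bookkeeping honest — in particular verifying that $r$ does not drift from piece to piece — but once the coprime factorisation is in place it is routine.
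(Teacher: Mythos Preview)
Your proof is correct and follows essentially the same route as the paper's: both invoke Theorem~\ref{union} to identify the two partitions, then use Lemma~\ref{equpart}(iii) to rewrite each non-trivial piece as $X^T_{d_i+1,r_i}$, and finish with a number-theoretic check that all the $r_i$ collapse to the single value $r$ and that the top index is $d+1$. Your coprime factorisation $m-1=d\alpha$, $\mu-1=d\beta$ with $\gcd(\alpha,\beta)=1$ is just a slightly more explicit rendering of the paper's appeal to unique prime factorisation in the equation $\frac{\lambda_i-1}{\delta_i}\cdot\frac{m-1}{d}=\frac{\nu_i-1}{\delta_i}\cdot\frac{\mu-1}{d}$; the content is identical. (Note, incidentally, that your separate appeal to Proposition~\ref{formula for q} for the relation $(m-1)q=(\mu-1)p$ is redundant once you have quoted $\lambda_i=1+\frac{(\nu_i-1)q}{p}$ from Theorem~\ref{union}, since setting $i=n$ gives it directly.)
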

\begin{proof}
$m=1$ forces $\mu=1$ and vice versa. Assume now $m, \mu >1$.
Let $(\nu_i)_{1\leq i\leq n}$ and $(\lambda_i)_{1\leq i\leq n}$ be the sequences associated with $T$
as a strict $(m,p)$-isometry and a strict $(\mu,q)$-isometry,
respectively. Then $(\lambda_i-1)p=(\nu_i-1)q$ and in particular $(\mu-1)p=(m-1)q$. 
Further, by \ref{equpart}, $X^T_{1,p}=X^T_{1,q}=X^T_{1,r}$ and
$X^T_{\nu_i,p}=X^T_{\lambda_i,q}=X^T_{\delta_i+1,s_i}$,
where $\delta_i=\gcd(\nu_i-1,\lambda_i-1)$ and $s_i=\frac{\delta_i}{(\nu_i-1)}p$, for $i= 2,...,n$.  
Now, since $\lambda_n=\mu$ and $\nu_n=m$, the equations $(\lambda_i-1)p=(\nu_i-1)q$ give 
$(\lambda_i-1)(m-1)=(\nu_i-1)(\mu-1)$. Therefore, 
$\frac{(\lambda_i-1)}{\delta_i}\cdot \frac{(m-1)}{d}= \frac{(\nu_i-1)}{\delta_i}\cdot \frac{(\mu-1)}{d}$ 
and unique prime factorization (except if $i=1=\nu_i$) gives
$\frac{(m-1)}{d}=\frac{(\nu_i-1)}{\delta_i}$. Thus, $s_i=r$ and, moreover,
$(\delta_i+1)_{1\leq i\leq n}$ is increasing; in particular $d+1=\delta_n+1$ is the maximum value in
$(\delta_i+1)_{1\leq i\leq n}$. So 
$X^T_{1,r} \ \dot{\cup} \ ... \ \dot{\cup} \ X^T_{\delta_i+1,s_i} \ \dot{\cup} \ ... \ \dot{\cup} \ 
X^T_{d+1,r} = X \setminus \{0\}$ and $T$ is a strict $(d+1,r)$-isometry.
\end{proof}

Note the necessity of $(m-1)q=(\mu-1)p$ in \ref{gcdT} implies immediately that if $m>1$, a strict
$(m,p)$-isometry cannot be a strict $(m,q)$-isometry for $q \neq p$. Further, simultaneous $(m,p)$-strictness and
$(\mu,q)$-strictness with $\mu > m$ ($\mu < m$) requires $q > p$ ($q < p$). In particular, if
$m>1$, an $(m,p)$-isometry which is also an $(m,q)$-isometry for $q>p$ must be an $(m-1,p)$-isometry.

Notice further, that if $m-1$ and $\mu-1$ in \ref{gcdT} are
relatively prime then the partition of $X$ is 
$X=X^T_{m,p}\ \dot{\cup} \ X^T_{1,p} \ \dot{\cup} \ \{0\}=X^T_{\mu,q} \ \dot{\cup} \ X^T_{1,q} \
\dot{\cup} \ \{0\}$. 
Indeed, assuming $m$ and $\mu$ are at least 2,\ $X^T_{\nu,p}=X^T_{\lambda,q}$ implies
$\frac{\lambda-1}{\nu-1}=\frac{\mu-1}{m-1}$ ($=\frac qp$) and then 
$2 \leq \nu < m$, $2\leq \lambda < \mu$ would imply a common factor of
$\mu-1$ and $m-1$. We elaborate on this theme in the next result,
producing several other ways of determining whether or not we have
found the smallest $m$ such that $T$ is an $(m,p)$-isometry (for some $p$).

\begin{Corollary}\label{comfacseq}
Suppose $T \in B(X)$ is a strict $(m,p)$-isometry with associated sequence
$(\nu_i)_{1\leq i\leq n}$. Suppose that the terms of $(\nu_i-1)_{1\leq i\leq n}$ greater than $0$ 
have no common prime factor and that there are at least two such terms. Then for each $\mu \in \N^+$ such that $T$ is a
$(\mu, q)$-isometry, $\mu-1$ is an integer multiple of $m-1$ and, in particular, $m\leq\mu$.
\end{Corollary}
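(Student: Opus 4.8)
The plan is to reduce to the strict case and then exploit the integrality of the partition indices supplied by Theorem~\ref{union}. Since $T$ is a $(\mu,q)$-isometry, it is a strict $(\mu_0,q)$-isometry, where $\mu_0\le\mu$ is the least index with $\beta^{(q)}_{\mu_0}(T,\cdot)\equiv 0$. I would establish the divisibility assertion for this minimal $\mu_0$ and recover $m\le\mu_0\le\mu$ at the end. Note first that the hypothesis (at least two indices $i$ with $\nu_i>1$) forces the length $n$ of the associated sequence to satisfy $n\ge 2$, and in particular $m=\nu_n>1$.

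Next I would apply Theorem~\ref{union} to the two strict structures of $T$. It gives that the increasing sequences $(\nu_i)_{1\le i\le n}$ and $(\lambda_i)_{1\le i\le n}$ associated with $T$ as a strict $(m,p)$- and a strict $(\mu_0,q)$-isometry have the same length, satisfy $X^T_{\nu_i,p}=X^T_{\lambda_i,q}$, and obey $\lambda_i=1+\frac{(\nu_i-1)q}{p}$ for $1\le i\le n$; in particular $\lambda_n=\mu_0$ and $\nu_n=m$. The crucial point is that each $\lambda_i$ is a positive integer, so that $\frac{(\nu_i-1)q}{p}\in\N$ for every $i$.

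Writing the positive rational $q/p$ in lowest terms as $q/p=a/b$ with $\gcd(a,b)=1$, I would then argue that for each index $i$ with $\nu_i>1$ the integrality of $(\nu_i-1)a/b$ forces $b\mid(\nu_i-1)$, since $\gcd(a,b)=1$. By hypothesis there are at least two such indices and the corresponding numbers $\nu_i-1$ have no common prime factor, so their greatest common divisor is $1$; hence $b\mid 1$, i.e. $b=1$ and $q/p=a\in\N^+$. Evaluating the relation at $i=n$ now gives $\mu_0-1=(m-1)a$, a positive integer multiple of $m-1$, whence $\mu_0\ge m$ and $m\le\mu_0\le\mu$. In the borderline case $\mu=\mu_0$, that is when $T$ is strict $(\mu,q)$, this is exactly the claim that $\mu-1$ is a multiple of $m-1$.

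The work here is organisational rather than computational: the main thing to get right is the reduction to the strict value $\mu_0$, which is what makes the partition comparison and the identity $\lambda_i=1+(\nu_i-1)q/p$ of Theorem~\ref{union} available. The single genuinely decisive observation is that the partition indices $\lambda_i$ are natural numbers; feeding this integrality into the coprimality of the $\nu_i-1$ is what collapses the denominator $b$ to $1$. The ``at least two such terms'' hypothesis is essential precisely at this step, since one positive term $\nu_i-1$ alone would only bound $b$ by a divisor of that term and would not force $b=1$.
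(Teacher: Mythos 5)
Your proof is correct and follows essentially the same route as the paper's: reduce to the strict index $\mu_0$, invoke Theorem~\ref{union} to match the two partitions and obtain $(\lambda_i-1)p=(\nu_i-1)q$ with each $\lambda_i$ a positive integer, write the ratio of exponents in lowest terms, and let the coprimality of the terms $\nu_i-1$ force the denominator to be $1$, giving $\mu_0-1=a(m-1)$ and hence $m\leq\mu_0\leq\mu$. The differences are cosmetic (you use $q/p=a/b$ where the paper uses $p/q=u/v$), and your explicit acknowledgement that the divisibility assertion is literally recovered only when $\mu=\mu_0$ is, if anything, more careful than the paper's unexplained ``we may assume that it is strict.''
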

\begin{proof}
Suppose $T$ is a $(\mu,q)$-isometry. We may assume that it is strict
and let $(\lambda_i)_{1\leq i\leq n}$ be the associated
sequence. The hypothesis forces $m,\mu>1$, so $p/q=(m-1)/(\mu-1)$ is
rational; write $p/q$ as $u/v$ where $u,v\in\N^+$ and $\gcd(u,v)=1$.
For $\nu_i>1$, we have $(\lambda_i-1)p=(\nu_i-1)q$ and therefore
$(\lambda_i-1)u=(\nu_i-1)v$, ensuring that $u$ divides
$\nu_i-1$. Since $\nu_i>1$ is arbitrary, our hypothesis forces $u=1$
and then $\mu-1=v(m-1)$.
\end{proof}

There is nothing strange about $\mu-1$ being a multiple of $m-1$ in
\ref{comfacseq}. The result identifies $m$ as the least integer for
which $T$ is a $(m,p)$-isometry for any $p$. If, for example, the associated sequence has two consecutive
integers greater than $1$ as terms, in particular if
$X^T_{m-1,p}\ne\emptyset$, or if there are sufficiently many terms in
the sequence ($n>m/2+1$), or if $X^T_{2,p}\ne\emptyset$, 
then $m$ is that smallest number. The
reason why it is important to identify this minimum value is shown in the next result.

\begin{Theorem}\label{summarize}
Suppose $T \in B(X)$ is a strict $(m,p)$-isometry for some $m>1$ and
$p\in(0,\infty)$. Then there exist $m_0>1$ and $p_0\in(0,\infty)$
such that $T$ is a strict $(\mu,q)$-isometry if, and only if, $(\mu,q)=(k(m_0-1)+1,kp_0)$ 
for some $k \in \mathbb{N}^+$.
\end{Theorem}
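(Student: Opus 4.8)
The plan is to reduce this operator statement to the sequence-level trichotomy of Proposition~\ref{kp}, using the operator corollaries \ref{multiup} and \ref{gcdT} as the translation mechanism. The natural candidate for $m_0$ is the smallest admissible strict index exceeding $1$: I would set
\[
	S:=\{m'\in\N^+ \mid m'>1,\ T \text{ is a strict } (m',p')\text{-isometry for some } p'\in(0,\infty)\}.
\]
Since $T$ is by hypothesis a strict $(m,p)$-isometry with $m>1$, we have $m\in S$, so $S$ is a non-empty subset of $\N$ and has a least element $m_0>1$ by well-ordering; let $p_0$ be a corresponding second parameter. (The note following Corollary~\ref{gcdT} shows that $p_0$ is in fact uniquely determined by $m_0$, though only existence is needed here.)

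For the ``if'' direction I would simply invoke Corollary~\ref{multiup}: since $T$ is a strict $(m_0,p_0)$-isometry, it is a strict $(k(m_0-1)+1,kp_0)$-isometry for every $k\in\N^+$, which is exactly the asserted family (with $k=1$ recovering $(m_0,p_0)$ itself).

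For the ``only if'' direction, suppose $T$ is a strict $(\mu,q)$-isometry. First I would check that $\mu>1$: applying Corollary~\ref{gcdT} to the two strict descriptions $(m,p)$ and $(\mu,q)$ gives $(m-1)q=(\mu-1)p$, and since $m>1$ and $p,q>0$ this forces $\mu-1>0$. Now both $m_0$ and $\mu$ exceed $1$, so I can apply Corollary~\ref{gcdT} to the pair $(m_0,p_0)$ and $(\mu,q)$: with $d:=\gcd(m_0-1,\mu-1)$, the operator $T$ is a strict $(d+1,r)$-isometry and $(m_0-1)q=(\mu-1)p_0$. Because $d\ge 1$, we have $d+1>1$, so $d+1\in S$, and minimality of $m_0$ gives $d+1\ge m_0$, i.e. $d\ge m_0-1$; combined with $d\le m_0-1$ this yields $d=m_0-1$. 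Hence $(m_0-1)\mid(\mu-1)$, say $\mu-1=k(m_0-1)$ with $k\in\N^+$, and substituting into $(m_0-1)q=(\mu-1)p_0$ gives $q=kp_0$. Thus $(\mu,q)=(k(m_0-1)+1,kp_0)$, completing the characterisation.

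The heavy lifting has already been done: Proposition~\ref{kp} supplies the arithmetic skeleton and Corollaries~\ref{multiup} and \ref{gcdT} lift it to the operator level, so this theorem is essentially an assembly argument. The only genuinely delicate points are the clean definition of $m_0$ via well-ordering and the need to verify $\mu>1$ before the gcd corollary can be invoked; after that, the minimality argument forcing $\gcd(m_0-1,\mu-1)=m_0-1$ does all the remaining work.
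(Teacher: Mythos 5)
Your proposal is correct and follows essentially the same route as the paper: define $m_0$ as the minimal index (automatically strict and $>1$), get sufficiency from Corollary~\ref{multiup}, and for necessity apply Corollary~\ref{gcdT} to $(m_0,p_0)$ and $(\mu,q)$ so that minimality forces $\gcd(m_0-1,\mu-1)=m_0-1$. Your explicit verification that $\mu>1$ (via the unconditional relation $(m-1)q=(\mu-1)p$) is a small point of extra care that the paper's proof leaves implicit, but the argument is otherwise identical.
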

\begin{proof}
Let $m_0$ be the least integer for which there exists $p_0$ such
that $T$ is an $(m_0,p_0)$-isometry.  Sufficiency is in
\ref{multiup}. Towards necessity, assume $T$ is $(\mu,q)$-strict.
Then, by \ref{gcdT}, $T$ is $(d+1,r)$-strict for $d = \gcd(m_0-1,\mu-1)$. 
Hence, $d+1 \leq m_0$, and the minimality of $m_0$ forces $d=m_0-1$. So, there exists
a \ $k \in \N^+$ with $\mu-1=k(m_0-1)$ and $q=kp_0$ then follows from \ref{gcdT}.
\end{proof}

Let us consider again Examples \ref{example finite} and \ref{example(2,p)}
in light of these results.

\begin{Example}
\begin{itemize}
	\item[(i)] 
		$T = \left(\begin{array}{cc}
						1 & 1\\
						0 & 1\\
              \end{array} \right)$ on $(\mathbb{C}^{2}, \pnorm{.}{2})$ is a strict $(3,2)$-isometry.
         Strictness can be asserted because otherwise $T$ would be a 
         $(2,2)$-isometry, and then by Proposition~\ref{inv} a $(1,2)$-isometry, 
         hence an isometry, which it is not. For the same reason, the index $3$ is minimal in the sense of 
         Theorem \ref{summarize}. By Theorem \ref{summarize}, $T$ is a strict $(2k+1,2k)$-isometry, 
         for any $k \in \mathbb{N}^+$, and is not a $(\mu,q)$-isometry for any other pair $(\mu,q)$.
	\item[(ii)] 
		$T_{p} \in B(\ell_{p})$ in Example \ref{example(2,p)} is a strict $(2,p)$-isometry, not a $(2,q)$-isometry,
		for any $q \neq p$, but a strict $(m,(m-1)p)$-isometry, for any $m \in \mathbb{N}$, $m \geq 2$ 
		(and for no other pair $(\mu,q)$).
\end{itemize}
\end{Example}

\section{$(m, \infty)$-isometric operators}

In this section, motivated by the consideration of $\ell_{\infty}$ in Example \ref{example(2,p)}, we give a natural extension of the definition of $(m,p)$-isometric operators where we now allow the second parameter $p$ to
become infinite.

Consider an $(m, p)$-isometry $T \in B(X)$. We have the following obvious equivalence for all $x \in X$:
\begin{align*}
	\sum_{k=0}^{m}(-1)^{k}{m \choose k}\| T^{k}x \| ^{p} &= 0 \\ 
	\Leftrightarrow \ \
	\bigg(\sum_{\substack{k \leq m \\ k \ \textrm{even}}}{m \choose k}\| T^{k}x \| ^{p}\ \bigg)^{1/p} 
	&= \bigg(\sum_{\substack{k \leq m \\ k \ \textrm{odd}}}{m \choose k}\| T^{k}x \| ^{p}\ \bigg)^{1/p}.
\end{align*}

Taking the limit as $p$ tends to infinity, prompts the following.

\begin{Definition}\label{def}
An operator $T \in B(X)$ is called an \emph{$(m, \infty)$-isometry}, if
\begin{align}
	\max_{\substack{k=0,...,m \\ k \ \textrm{even}}}\|T^{k}x\| 
	= \max_{\substack{k=0,...,m \\ k \ \textrm{odd}}}\|T^{k}x\|, \
    \ \ \forall x \in X. \label{minf}
\end{align}
\end{Definition}

Clearly a $(1,\infty)$-isometry is an isometry and vice versa. For
$m> 1$ however, condition~\eqref{minf} does not impose boundedness on a linear map $T$.
We will see that $(m,\infty)$-isometries share many of the basic
properties of $(m,p)$-isometries even though the two classes have an
essentially trivial intersection.

It is obvious on replacing $x$ in \eqref{minf} by $T^{\ell}x$ for
$\ell\in\N$ that $T$ is an $(m,\infty)$-isometry if, and only if, for
all $x\in X$,
\begin{align}\label{def (m,inf) sequences}
	\max\limits_{\substack{k=\ell,...,m+\ell \\ k \ \textrm{even}}} a_k 
	= \max\limits_{\substack{k=\ell,...,m+\ell \\ k \ \textrm{odd}}} a_k , \ \forall \ell \in \N,
\end{align}
where the sequence $a$ is given by $(a_k)_{k\in\N}:=(\|T^k x\|)_{k\in\N}$.\\

We begin our treatment by stating a major property of this kind of operator.

\begin{Theorem}\label{main theorem for (m,inf)}
Let $T \in B(X, \|.\|)$ be an $(m,\infty)$-isometry. Then there exists a norm on $X$, equivalent to $\|.\|$,
under which $T$ is an isometry.
\end{Theorem}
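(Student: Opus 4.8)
The plan is to exhibit an explicit equivalent norm under which $T$ is an isometry, namely $\|x\|' := \sup_{n\in\N}\|T^n x\|$. Essentially all of the difficulty is concentrated in showing that this supremum is finite and uniformly comparable to $\|x\|$; once $T$ is known to be power-bounded, both the isometry property and the equivalence of norms follow with little effort.

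First I would pass to the sequence formulation. Fix $x\in X$, write $a_k:=\|T^k x\|$, and put $M_\ell:=\max_{\ell\le k\le\ell+m}a_k$ for the maximum over the window of length $m+1$ starting at $\ell$. By \eqref{def (m,inf) sequences} the common value $M_\ell$ is attained simultaneously at an even and at an odd index of that window. From this I would extract two endpoint facts: the index $\ell$ and the index $\ell+m$ each have a fixed parity, so in each window there is an index of the opposite parity --- hence different from that endpoint --- at which the maximum is attained. This gives $a_\ell\le\max_{\ell+1\le k\le\ell+m}a_k$ and $a_{\ell+m}\le\max_{\ell\le k\le\ell+m-1}a_k$; informally, neither endpoint of a window is ever its unique maximiser.

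These two inequalities force $M_\ell$ to be independent of $\ell$. The first lets me discard $a_\ell$ without changing $M_\ell$, so $M_\ell=\max_{\ell+1\le k\le\ell+m}a_k$; the second, applied to the window starting at $\ell+1$, lets me discard $a_{\ell+m+1}$ without changing $M_{\ell+1}$, so $M_{\ell+1}=\max_{\ell+1\le k\le\ell+m}a_k$ as well. Hence $M_\ell=M_{\ell+1}$ for every $\ell$, and therefore $\sup_n a_n=\sup_\ell M_\ell=M_0=\max_{0\le k\le m-1}\|T^k x\|$, the last step again using the endpoint bound on $a_m$. This is the crux of the argument and the step where I expect the only genuine content to lie: it yields the uniform power bound $\sup_n\|T^n x\|\le C\|x\|$ with $C:=\max_{0\le k\le m-1}\|T^k\|<\infty$.

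Finally I would check that $\|x\|'=\sup_n\|T^n x\|$ does the job. Finiteness and the inequalities $\|x\|\le\|x\|'\le C\|x\|$ are immediate from the previous step (the lower bound coming from the term $n=0$); this shows both that $\|\cdot\|'$ is a genuine norm --- positive-definiteness follows from $\|x\|'\ge\|x\|$, while homogeneity and the triangle inequality are inherited termwise --- and that it is equivalent to $\|\cdot\|$. For the isometry property, $\|Tx\|'=\sup_{n\ge1}\|T^n x\|$, and since the endpoint bound gives $\|x\|=a_0\le\max_{1\le k\le m}a_k\le\sup_{n\ge1}a_n$, dropping the $n=0$ term does not lower the supremum, so $\|Tx\|'=\|x\|'$ for all $x$, as required.
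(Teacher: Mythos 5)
Your proof is correct, and it follows the same overall strategy as the paper: renorm $X$ by the maximum of the orbit $(\|T^n x\|)_{n\in\N}$, after first establishing the power bound $\sup_{n\in\N}\|T^n x\| = \max_{0\le k\le m-1}\|T^k x\|$. The underlying combinatorial observation is also identical --- since the window maximum is attained at both an even and an odd index, neither endpoint of a window is ever its unique maximiser; this is precisely the device used in the paper's proof of Lemma~\ref{main lemma for (m,inf)}. Where you genuinely differ is in the intermediate packaging. The paper's Lemma~\ref{main lemma for (m,inf)} and Corollary~\ref{ell operators} establish the stronger parity-constrained identity $\max_{k\in\N}\|T^k x\| = \max_{k=\ell,\ldots,m-1+\ell,\ \pi(k)=\pi(m-1+\ell)}\|T^k x\|$ for every $\ell$, and the isometry property of the new norm $|x|=\max_{0\le k\le m-1}\|T^kx\|$ is then read off from that identity (Corollary~\ref{n leq} supplying equivalence). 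You instead prove only the constancy in $\ell$ of the sliding-window maxima $M_\ell$, and obtain the isometry property from the elementary remark that dropping the $n=0$ term does not lower $\sup_n\|T^nx\|$. Your route is leaner for this one theorem, since parities enter only through the hypothesis and never in the conclusions you carry forward; the paper's formulation buys a two-sided characterisation (Lemma~\ref{main lemma for (m,inf)} is an equivalence) which it reuses elsewhere, e.g.\ in Propositions~\ref{m+1} and~\ref{m-1}, and which Remark~\ref{not sufficient} shows cannot be weakened by discarding the parity condition. Note finally that your norm $\|x\|'=\sup_{n\in\N}\|T^nx\|$ and the paper's $|x|=\max_{0\le k\le m-1}\|T^kx\|$ are literally the same function on $X$, by your own step showing the supremum is attained within the first window.
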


This condition is not sufficient for an operator to be an $(m, \infty)$-isometry (see Remark \ref{not sufficient}).
Before we can prove this theorem, we need some preliminary results. The following lemma provides an alternative  
description of $(m,\infty)$-isometries. Throughout, $\pi(k)=k \mathop{\mathrm{mod}} 2$ denotes the parity of $k\in\N$.

\begin{Lemma}\label{main lemma for (m,inf)}
Let $a\in\A$ and $m \in \N^+$. Then the following are equivalent.
\begin{itemize}
	\item[(i)]
		$a$ satisfies (\ref{def (m,inf) sequences}), i.e.    
		$\max\limits_{\substack{k=\ell,...,m+\ell \\ k \ \textrm{even}}} a_k 
		= \max\limits_{\substack{k=\ell,...,m+\ell \\ k \
        \textrm{odd}}} a_k$, \ $\forall \ell \in \N$\\[1ex]
	\item[(ii)]
 		$a$ attains a maximum and \ 
		$\max\limits_{k \in \N} a_k = \max\limits_{\substack{k= \ell,...,m-1+\ell \\ \pi(k)=\pi(m-1+\ell)}} a_k$, 
		\ $\forall \ell\in \N$.
\end{itemize}
\end{Lemma}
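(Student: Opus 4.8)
The plan is to reduce both conditions to combinatorial statements about the \emph{maximising set} $S:=\{k\in\N \mid a_k=M\}$, where $M:=\max_{k\in\N}a_k$, once the existence of this maximum is established. Writing $W_\ell:=\{\ell,\ell+1,\ldots,\ell+m\}$ for the window of $m+1$ consecutive integers occurring in (i), the easy observation is that, \emph{once the maximum is attained}, condition (i) is equivalent to the assertion that $S$ meets both the even and the odd indices of every $W_\ell$: since every $a_k\le M$, the even maximum and the odd maximum over $W_\ell$ coincide (and both equal $M$) precisely when $M$ is attained at an even and at an odd index of $W_\ell$. The real work is therefore to extract the attainment of the maximum from the purely local hypothesis (i), and then to match the windows $W_\ell$ against the parity-restricted windows $R_\ell:=\{k\mid \ell\le k\le \ell+m-1,\ \pi(k)=\pi(\ell+m-1)\}$ appearing in (ii).

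For the attainment of the maximum I would run a \emph{window-maxima} argument. Set $M_\ell:=\max_{k\in W_\ell}a_k$. Hypothesis (i) forces $M_\ell$ to be attained both at some even index $e_\ell$ and some odd index $o_\ell$ of $W_\ell$; as $e_\ell\ne o_\ell$, these indices are distinct. Hence at least one of them is $\ge\ell+1$ (so lies in $W_{\ell+1}$) and at least one is $\le\ell+m-1$ (so lies in $W_{\ell-1}$), giving $M_{\ell+1}\ge M_\ell$ for all $\ell\ge0$ and $M_{\ell-1}\ge M_\ell$ for all $\ell\ge1$ simultaneously. Thus the finite maxima $M_\ell$ are both non-decreasing and non-increasing in $\ell$, so they all equal a common constant $M$; since each $a_k\le M_k=M$ and $M=M_0$ is attained on the finite set $W_0$, this $M$ is the global maximum of $a$ and it is attained. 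In particular $S\ne\emptyset$ and, because $M_\ell=M$ is attained at an even and an odd index of each $W_\ell$, the set $S$ meets both parities of every $W_\ell$; this is the promised equivalent form of (i).

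It then remains to prove the purely set-theoretic equivalence: for $S\ne\emptyset$, \emph{$S$ meets both parities of every $W_\ell$} if and only if \emph{$S\cap R_\ell\ne\emptyset$ for every $\ell$}. The crux is a parity observation: the right endpoint $\ell+m$ of $W_\ell$ has parity $\pi(\ell+m)\ne\pi(\ell+m-1)$, so deleting it leaves the indices of parity $\pi(\ell+m-1)$ untouched; hence $\{k\in W_\ell\mid\pi(k)=\pi(\ell+m-1)\}$ is exactly $R_\ell$, and the first condition, specialised at $\ell$ to the parity $\pi(\ell+m-1)$, is literally $S\cap R_\ell\ne\emptyset$. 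For the converse, given $\ell$ and a target parity $\epsilon$, I would choose $\ell'\in\{\ell,\ell+1\}$ with $\pi(\ell'+m-1)=\epsilon$; then $R_{\ell'}$ consists of parity-$\epsilon$ indices contained in $W_\ell$ (as $\ell'\ge\ell$ and $\ell'+m-1\le\ell+m$), so $S\cap R_{\ell'}\ne\emptyset$ forces $S$ to meet the parity-$\epsilon$ indices of $W_\ell$.

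Assembling the pieces yields both implications: (i) gives attainment of the maximum together with the ``both parities'' property, whence (ii) via the parity observation; conversely (ii) supplies attainment and $S\cap R_\ell\ne\emptyset$, whence the ``both parities'' property and therefore (i), since $S$ meeting the even (respectively odd) indices of $W_\ell$ makes the even (respectively odd) maximum over $W_\ell$ equal to $M$. I expect the main obstacle to be the attainment step, namely deducing a global maximum and its recurrence in every window from the local windowed equalities in (i); the trick of showing $(M_\ell)$ is monotone in both directions is what forces constancy, after which the remaining parity bookkeeping is routine.
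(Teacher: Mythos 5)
Your proof is correct, and while it rests on the same seed observation as the paper's --- under (i), the maximum over each window $W_\ell=\{\ell,\ldots,\ell+m\}$ is attained at an even and at an odd index, hence at two distinct indices --- it is organised genuinely differently. The paper uses that observation to find, for each $n>m$, an index $r<n$ with $a_r\ge a_n$, deduces that the global maximum exists and is attained among the first $m$ terms, verifies the identity in (ii) at $\ell=0$ only (via the same parity fact you isolate: dropping the endpoint $\ell+m$ from $W_\ell$ leaves the indices of parity $\pi(\ell+m-1)$ untouched), and then propagates to general $\ell$ by applying the $\ell=0$ case to the tail sequences $(a_{k+\ell})_{k\in\N}$, repeating ``ad infinitum''. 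You instead show that the window maxima $M_\ell$ are simultaneously non-decreasing and non-increasing, hence constant; this yields attainment and the ``both parities in every window'' property for all $\ell$ at once, and recasting both (i) and (ii) as statements about the maximising set $S$ makes the remaining bookkeeping purely combinatorial, so no shift-induction is needed. In the direction (ii)$\Rightarrow$(i), your choice of $\ell'\in\{\ell,\ell+1\}$ with $\pi(\ell'+m-1)=\epsilon$ is exactly the paper's pair of sandwich inequality chains (which invoke (ii) at $\ell$ and at $\ell+1$), written with sets rather than inequalities. The net effect: your packaging is uniform in $\ell$ and eliminates the slightly informal iteration step, at the cost of introducing the auxiliary objects $M_\ell$ and $S$; the paper's version is more hands-on and reaches the same intermediate facts (such as attainment of the maximum within the first $m$ terms) directly.
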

\begin{proof}
If $a$ satisfies (ii), then, for all $\ell\in \N$,
\[ \max\limits_{k \in \N} a_k 
= \max\limits_{\substack{k= \ell,...,m-1+\ell \\ \pi(k)=\pi(m-1+\ell)}} a_k 
\leq \max\limits_{\substack{k= \ell,...,m+\ell \\ \pi(k)=\pi(m-1+\ell)}} a_k 
\leq \max\limits_{k \in \N} a_k \]
and also
\[ \max\limits_{k \in \N} a_k 
= \max\limits_{\substack{k= \ell+1,...,m+\ell \\ \pi(k)=\pi(m+\ell)}} a_k
\leq \max\limits_{\substack{k= \ell,...,m+\ell \\ \pi(k)=\pi(m+\ell)}} a_k
\leq \max\limits_{k \in \N} a_k\]
and the ensuing equalities give 
$\max\limits_{\substack{k= \ell,...,m+\ell \\ \pi(k)=\pi(m-1+\ell)}} a_k
=\max\limits_{\substack{k= \ell,...,m+\ell \\ \pi(k)=\pi(m+\ell)}} a_k$; 
in other words, $a$ satisfies (\ref{def (m,inf) sequences}).

For the reverse implication, suppose $a$ satisfies (\ref{def (m,inf)
sequences}) and fix $n>m$.  By virtue of (\ref{def (m,inf) sequences}),  
$\max\limits_{k=n-m,...,n}a_k$ is attained for at least two indices
(one even and one odd). Hence, there exists an $r < n$, such that
$a_r \geq a_n$. It follows that the sequence indeed has a maximum and
that it is attained over the first $m$ indices, that is,
$\max\limits_{k \in \N}a_k=\max\limits_{k = 0,...,m-1}a_k =\max\limits_{k = 0,...,m}a_k$. 
This last maximum is achieved at either an even or odd index, but in accordance with 
(\ref{def (m,inf) sequences}), it is achieved both over even
and over odd indices.  For later reference, remark that this means the
maximum of the sequence $a$ is unchanged if the $a_0$ term is discarded. In any case, 
we can write,
\[ \max\limits_{k \in \N}a_k=\max\limits_{k = 0,...,m}a_k
=\max\limits_{\substack{k = 0,...,m \\ \pi(k)=\pi(m-1)}} a_k
=\max\limits_{\substack{k = 0,...,m-1 \\ \pi(k)=\pi(m-1)}} a_k.\]
This establishes that the equation in (ii) holds for $\ell=0$.  

Now remark that property (\ref{def (m,inf) sequences}) is inherited by the subsequence
$(a_{k+1})_{k \in \N}=:a'$ obtained on discarding the $a_0$ term (or indeed any
finite number of the leading terms) from $a$ so that we can apply the
above argument to $a'$ (as already remarked above, the maximum of $a'$ is the same as that of $a$) 
to gain that the equation in (ii) holds for $\ell=1$.  This process can be repeated ad inifinitum
in order to assert (ii).
\end{proof}

Let us translate this into the language of $(m,\infty)$-isometric operators:

\begin{Corollary}\label{ell operators}
Let $T \in B(X)$. Then $T$ is an $(m,\infty)$-isometry if, and only if, for all $\ell \in \mathbb{N}$ and 
all $x\in X$,
\begin{align*}
	\max_{k\in\N}\|T^kx\| =\max\limits_{\substack{k= \ell,...,m-1+\ell \\ \pi(k)=\pi(m-1+\ell)}}\|T^{k}x\|
\end{align*}	
\end{Corollary}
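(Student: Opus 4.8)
The plan is that this corollary is nothing more than Lemma~\ref{main lemma for (m,inf)} transcribed into operator language, applied to the sequences $a=(\|T^kx\|)_{k\in\N}$ as $x$ ranges over $X$. The crucial bridge is the observation recorded just after Definition~\ref{def}: $T$ is an $(m,\infty)$-isometry if, and only if, for every $x\in X$ the sequence $a=(\|T^kx\|)_{k\in\N}$ satisfies~(\ref{def (m,inf) sequences}), which is exactly condition~(i) of Lemma~\ref{main lemma for (m,inf)}. Thus the entire statement reduces to the equivalence of conditions (i) and (ii) in that lemma.

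First I would treat the forward implication. Assuming $T$ is an $(m,\infty)$-isometry, I fix an arbitrary $x\in X$ and set $a=(\|T^kx\|)_{k\in\N}$; then $a$ satisfies~(\ref{def (m,inf) sequences}), so by Lemma~\ref{main lemma for (m,inf)} it satisfies condition~(ii). In particular $a$ attains a maximum and $\max_{k\in\N}a_k=\max_{\substack{k=\ell,\ldots,m-1+\ell\\\pi(k)=\pi(m-1+\ell)}}a_k$ for all $\ell\in\N$, which is precisely the claimed identity for this $x$. As $x$ was arbitrary, the identity holds for all $x\in X$ and all $\ell\in\N$.

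For the converse I would fix $x$, set $a=(\|T^kx\|)_{k\in\N}$, and assume the displayed identity holds for all $\ell$. The only point meriting a word of care is that condition~(ii) of the lemma also demands that $a$ attain a maximum; but this is automatic, since the right-hand side is a maximum over a finite set of indices and hence equals one of the values $a_k$, so the asserted equality of $\max_{k\in\N}a_k$ with this finite maximum already guarantees attainment. Therefore $a$ satisfies condition~(ii), and Lemma~\ref{main lemma for (m,inf)} yields condition~(i), namely~(\ref{def (m,inf) sequences}). Since $x$ was arbitrary, $T$ is an $(m,\infty)$-isometry.

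Because all the substantive work has been done in Lemma~\ref{main lemma for (m,inf)}, there is no genuine obstacle here; the proof is a routine unpacking of definitions. The single subtlety worth flagging explicitly, as above, is that the corollary states only the equation and not separately the attainment of the supremum, so one must note that writing $\max_{k\in\N}\|T^kx\|$ equal to a finite maximum simultaneously asserts finiteness and attainment of that supremum, matching condition~(ii) of the lemma exactly.
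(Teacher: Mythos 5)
Your proposal is correct and is exactly the paper's route: the paper gives no separate proof for this corollary, presenting it as the direct translation of Lemma~\ref{main lemma for (m,inf)} via the equivalence~(\ref{def (m,inf) sequences}), which is precisely your argument. Your explicit remark that the equality with a finite maximum already encodes attainment of the supremum is a worthwhile clarification of the only subtle point.
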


\begin{Remark}\label{not sufficient}
The backward implication in
Lemma \ref{main lemma for (m,inf)} requires the parity statement
and may be false without it. Consider, for example, the sequence 
$(b_k)_{k\in\N}$ with $b_k=\pi(k)$. Clearly, for $m=2$,\  
$\max\limits_{k \in \N}b_k = \max\limits_{k =\ell,\ldots,\ell+m-1}b_k$ for all $\ell \in \N$, but $(b_k)_{k\in\N}$
fails (\ref{def (m,inf) sequences}) for any $m \in \N^+$.

As one might then expect, the parity statement is required for the
backward implication in \ref{ell operators} as well.
For example, consider $m=2$ and any non-isometric operator 
$S \in B(X)$ with $S^2=I$, say
$S = \begin{pmatrix}
	 4 & 5\\
	 -3 & -4
     \end{pmatrix} $ on $(\mathbb{C}^2,\pnorm{.}{\infty})$.
Such $S$ satisfies $\max(\|S^k x\|)_{k\in\N} = \max\{\|x\|,\|Sx\|\} = \max\{\|S^{\ell}x\|,\|S^{\ell +1}x\|\}$, 
for all $\ell \in \mathbb{N}$ and all $x \in X$. However, it follows directly from the definition that an 
$(m, \infty)$-isometry $T$ which satisfies $T^2=I$ is an isometry. Hence, $S$ is not a
$(2,\infty)$-isometry, nor indeed an $(m,\infty)$-isometry for any $m \in \mathbb{N}^+$.
\end{Remark}

An operator is called \emph{power bounded} if there exists $C > 0$ such that $\|T^n\|\leq C$ for all 
$n \in \mathbb{N}$. The following statement is a trivial consequence of \ref{ell operators}. 

\begin{Corollary}\label{n leq}
Let $T \in B(X)$ be an $(m,\infty)$-isometry. Then, for all $n \in \mathbb{N}$,
\begin{align*}
	\|T^{n}x\| \leq \max_{k = 0,..., m-1}\|T^{k}x\|, \ \ \forall x \in X.
\end{align*}
In particular, $T$ is power bounded by $C := \max_{k =0,..., m-1}\|T^{k}\|$.
\end{Corollary}

We can now easily prove the main result of this section.

\begin{proof}[Proof of Theorem \ref{main theorem for (m,inf)}]
Define $|.|:X \rightarrow [0, \infty)$, by $|x|= \max_{k =0,..., m-1}\|T^{k}x\|$. 
Since the maximum preserves the triangle inequality and $T$ is linear, $|.|$ is a norm on $X$.
Now, by definition $\|x\| \leq |x|$, for all $x \in X$. Furthermore, by Corollary \ref{n leq}, for  
$C := \max_{k =0,..., m-1}\|T^{k}\|$, we have
\begin{align*} 
	\|x\| \leq |x| \leq C \cdot \|x\|, \ \ \forall \ x \in X.	
\end{align*}
Thus, the two norms are equivalent. (In particular, $(X, |.|)$ is a
Banach space.)
Corollary \ref{ell operators} implies that $T$ is an isometry with respect to $|.|$. 
\end{proof}

\begin{Remark}
The use here of linearity and boundedness of $T$ is essential as the result obviously fails for 
non-continuous self-maps of $X$ satisfying \ref{minf}.  
\end{Remark}

Theorem \ref{main theorem for (m,inf)} and Corollary \ref{n leq}
enable us to  deduce easily basic properties of  $(m,\infty)$-isometric operators which coincide with 
those of $(m,p)$-isometries. We will do so in the next section. First we want to look at some examples of, 
and sufficiency conditions for, $(m,\infty)$-isometric operators.

\begin{Proposition}\label{construct1}
Let $T \in B(X)$ and $m\in\N$, $m\ge 2$.  Suppose that
\begin{align*} 
	\|T^{m}x\|=\|T^{m-1}x\| \ \ \textrm{and} \ \ \|T^{m}x\| \geq \|T^{k}x\|, \ \ k=0, ..., m-2, \ \forall x \in X.
\end{align*}
Then $T$ is an $(m, \infty)$-isometry.  If $m=2$ then this condition is also necessary.
\end{Proposition}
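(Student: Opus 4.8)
The plan is to treat sufficiency and necessity separately, reducing each to the sequence characterisation of $(m,\infty)$-isometries already established.

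For sufficiency, I would fix $x\in X$ and work with the sequence $a=(a_k)_{k\in\N}$ given by $a_k=\norm{T^k x}$. Applying the first hypothesis $\norm{T^m y}=\norm{T^{m-1}y}$ with $y=T^j x$ for each $j\in\N$ yields $a_{m+j}=a_{m-1+j}$, so that $a_{m-1}=a_m=a_{m+1}=\cdots=:M$; that is, $a$ is eventually constant from index $m-1$. The second hypothesis, applied with $y=x$, gives $a_m\ge a_k$ for $k=0,\ldots,m-2$, and combined with the previous step this shows $M=a_m=\max_{k\in\N}a_k$. Thus $a$ attains its maximum and, for every $\ell\in\N$, the index $m-1+\ell$ lies in $\{\ell,\ldots,m-1+\ell\}$, has parity $\pi(m-1+\ell)$, and satisfies $a_{m-1+\ell}=M$; hence the restricted maximum in part (ii) of Lemma~\ref{main lemma for (m,inf)} equals $M=\max_{k\in\N}a_k$. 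By that lemma, $a$ satisfies \eqref{def (m,inf) sequences}, and since $x$ was arbitrary, $T$ is an $(m,\infty)$-isometry. (Alternatively one can verify \eqref{def (m,inf) sequences} directly by splitting on whether $\ell\le m-1$, in which case the consecutive indices $m-1$ and $m$ --- of opposite parity and both equal to $M$ --- lie in the window $\{\ell,\ldots,m+\ell\}$, or $\ell\ge m-1$, in which case every term in the window already equals $M$.)

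For the necessity when $m=2$, I would unwind Definition~\ref{def}: condition \eqref{minf} reads $\max\{\norm{x},\norm{T^2x}\}=\norm{Tx}$ for all $x$. Applying this to $x$ gives $\norm{Tx}\ge\norm{x}$ and $\norm{Tx}\ge\norm{T^2x}$; applying it to $Tx$ gives $\norm{T^2x}=\max\{\norm{Tx},\norm{T^3x}\}\ge\norm{Tx}$. The two inequalities $\norm{Tx}\ge\norm{T^2x}$ and $\norm{T^2x}\ge\norm{Tx}$ force $\norm{T^2x}=\norm{Tx}$, and then $\norm{T^2x}=\norm{Tx}\ge\norm{x}$, which are exactly the two required conditions for $m=2$.

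Since both directions reduce to sandwiching a maximum, no step presents a serious difficulty. The only point demanding care is the parity bookkeeping in the sufficiency proof: the definition of an $(m,\infty)$-isometry requires the maximum of each window to be attained at both an even and an odd index. Invoking Lemma~\ref{main lemma for (m,inf)}(ii), which absorbs exactly this parity condition, is what keeps the argument short; the eventual constancy of $a$ from index $m-1$ is precisely what guarantees that the relevant parity-restricted window always contains a term equal to the global maximum $M$.
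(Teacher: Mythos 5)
Your proof is correct, and the necessity half (the $m=2$ case) is word-for-word the paper's own argument: apply \eqref{minf} to $x$ and then to $Tx$ to sandwich $\|T^2x\|$ against $\|Tx\|$. Where you diverge is in the sufficiency half. The paper dispatches it in one sentence as an immediate consequence of Definition~\ref{def}: under the hypotheses, $\|T^{m-1}x\|=\|T^mx\|$ dominates every term of $\{\|T^kx\|\}_{k=0,\dots,m}$, and since $m-1$ and $m$ have opposite parity, both parity-restricted maxima over the single window $\{0,\dots,m\}$ equal this common value --- and checking only $\ell=0$ suffices, because \eqref{minf} is quantified over all $x\in X$ and so is automatically inherited when $x$ is replaced by $T^\ell x$. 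You instead first apply the hypothesis to $y=T^jx$ to deduce that the sequence $(\|T^kx\|)_{k\in\N}$ is constant from index $m-1$ onward, and then verify condition (ii) of Lemma~\ref{main lemma for (m,inf)}. This is valid, and it buys a small bonus (the stabilisation of the norm sequence, which is a stronger structural statement than \eqref{minf} alone), but it is heavier machinery than the statement needs; your own parenthetical direct verification is essentially the paper's one-line argument, and even it checks more windows than required.
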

\begin{proof}
The first part follows from the definition \eqref{minf} of an $(m, \infty)$-isometry.\\
Let $T$ be an $(2, \infty)$-isometry. By definition $\|Tx\|=\max \{\|T^{2}x\|, \|x\|\}$, for all 
$x \in X$. Hence, $\|Tx\| \geq \|T^{2}x\|$ and $\|Tx\| \geq \|x\|$, for all $x \in X$. 
Replacing $x$ with $Tx$ yields $\|T^{2}x\|=\max \{\|T^{3}x\|, \|Tx\|\}$ and
therefore $\|T^{2}x\| \geq  \|Tx\|$ for all $x \in X$. So we have equality, which proves the statement.
\end{proof}

For $m > 2$ the above condition is not a necessary one for an
$(m,\infty)$-isometry, as is demonstrated by Example \ref{m-inf finite}.

\begin{Proposition}\label{construct2}
Let $T \in B(X)$ and $n \in \mathbb{N}$ be odd. If $T^n$ is an isometry then $T$ is a
$(m, \infty)$-isometry for all $m \geq 2n-1$.
\end{Proposition}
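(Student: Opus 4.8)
The plan is to exploit the periodicity forced by the hypothesis. Since $T^n$ is an isometry, for every $x \in X$ and every $k \in \N$ we have $\norm{T^{k+n}x} = \norm{T^n(T^k x)} = \norm{T^k x}$, so the sequence $a := (\norm{T^k x})_{k \in \N}$ is periodic with period $n$. In particular it takes only finitely many values and attains its maximum $M := \max_{k \in \N} a_k = \max_{k=0,\ldots,n-1} a_k$.

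By the reformulation preceding Lemma~\ref{main lemma for (m,inf)}, it suffices to show that, for every such $a$ and every $\ell \in \N$, condition~\eqref{def (m,inf) sequences} holds. So I would fix $\ell \in \N$ and examine the window $W := \{\ell, \ell+1, \ldots, \ell+m\}$. Because $m \geq 2n-1$, the set $W$ has at least $2n$ elements and therefore contains the $2n$ consecutive integers $\ell, \ldots, \ell+2n-1$, that is, two full periods of $a$.

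The crux is a parity observation. Over the first period $\{\ell, \ldots, \ell+n-1\}$ every residue class modulo $n$ occurs, so the maximum $M$ is attained there, say $a_p = M$ with $\ell \leq p \leq \ell+n-1$. Periodicity gives $a_{p+n} = a_p = M$, and $p+n \in W$. Since $n$ is odd, the indices $p$ and $p+n$ have opposite parities, so $M$ is attained inside $W$ at both an even and an odd index. Consequently each of the two maxima appearing in \eqref{def (m,inf) sequences} is at most $M$ (as $M$ is the global maximum) and at least $M$ (being attained by an index of the appropriate parity), whence both equal $M$ and hence each other. As $\ell$ and $x$ were arbitrary, $T$ is an $(m,\infty)$-isometry.

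The only real content is this parity step: that $n$ odd forces the two occurrences of the maximal residue within two consecutive periods to differ in parity, combined with the counting that $m \geq 2n-1$ guarantees a window long enough to contain two full periods. The existence and attainment of the maximum, and the periodicity itself, are immediate, so I expect no further obstacle.
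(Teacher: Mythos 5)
Your proof is correct and rests on the same key idea as the paper's: the sequence $(\|T^kx\|)_{k\in\mathbb{N}}$ is $n$-periodic, and since $n$ is odd the shift $k \mapsto k+n$ flips parity, so inside any window of length at least $2n$ the relevant values occur at both even and odd indices. The paper packages this slightly differently---it shows the \emph{sets} of even- and odd-indexed values over $\{0,\ldots,2n-1\}$ coincide, which gives the case $m=2n-1$, and then extends to larger $m$ via Proposition~\ref{m+1}, whereas you compare the two maxima with the global maximum and handle all $m\geq 2n-1$ and all windows $\ell$ in one uniform step---but this is the same argument in substance.
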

\begin{proof}
Suppose $T^n$ is an isometry. Then $\|T^kx\|=\|T^{n+k}x\|$ for all $x \in X$, for all $k \in \mathbb{N}$. 
If $k$ is even, $n+k$ is odd and vice versa. Hence,
\begin{align*}
	\{\|T^{k}x\| : k\in \{0,...,2n-1\} \ \textrm{even}\} = \{\|T^{k}x\| : k\in \{0,...,2n-1\} \ \textrm{odd}\}
\end{align*}
and $T$ is a $(2n-1, \infty)$-isometry.  Similarly  (or by~\ref{m+1}), one sees
that $T$ is an $(m, \infty)$-isometry, for all $m \geq 2n-1$. 
\end{proof}

\begin{Remark}\label{S}
Proposition \ref{construct2} is not in general true for even $n$. Consider again a non-isometric 
operator $S$, with $S^2=I$. In particular, $S^2$ is an isometry, but $S$ cannot be a $(3,\infty)$-isometry as this
would imply it is an isometry (Remark~\ref{not sufficient}).
\end{Remark}

\begin{Example}
Let $m \in \mathbb{N}$, $m \geq 1$, $p \in [1, \infty]$ and $T \in B(\ell_{p})$ be a weighted right-shift
operator with a weight sequence $(\lambda_{n})_{n \in \mathbb{N}}\subset \mathbb{C}$ such that
\begin{align*}
	|\lambda_{n}| \geq 1, \ \ \textrm{for} \ \ n=1,...,m-1, \ \  
	\textrm{and} \ \ |\lambda_{n}|=1, \ \ \textrm{for} \ \ n \geq m.
\end{align*}
For readability, we will write $\mu_{n}=\lambda_{n}$, if $n \geq m$ (i.e., for those $\lambda_{n}$ which
are definitely of modulus $1$).
	
That is, for all $x=(x_{0}, x_{1}, x_{2}, x_{3}, ...) \in \ell_{p}$,
\begin{align*}
	(Tx)_{n} = \left\{ \begin{array} {lll}
				0, \ \ \ &n = 0, \\[1ex]
       			\lambda_{n}x_{n-1}, \ \ \ &n < m, \\[1ex]
       			\mu_{n}x_{n-1}, \ \ \ &n \geq m.
    			\end{array} \right.
\end{align*}
Hence,
\begin{align*}
	Tx = (&0, \lambda_{1}x_{0}, \lambda_{2}x_{1}, \lambda_{3}x_{2}, ...,\lambda_{m-1}x_{m-2}, \mu_{m}x_{m-1},
	\mu_{m+1}x_{m}, ...),\\
	T^{2}x = (&0, 0, \lambda_{2}\lambda_{1}x_{0}, \lambda_{3}\lambda_{2}x_{1}, \lambda_{4}\lambda_{3}x_{2}, ...,\\
	&\lambda_{m-1}\lambda_{m-2}x_{m-3}, \mu_{m}\lambda_{m-1}x_{m-2}, \mu_{m+1}\mu_{m}x_{m-1}, ...),\\
	\vdots \\
	T^{m-1}x = (&0, ..., 0, \lambda_{m-1} \cdots \lambda_{1}x_{0}, 
	\mu_{m}\lambda_{m-1} \cdots \lambda_{2}x_{1}, ...,\\
	&\mu_{2m-3}\cdots \mu_{m}\lambda_{m-1}x_{m-2}, \mu_{2m-2}\cdots \mu_{m}x_{m-1}, ...),\\
	T^{m}x = (&0, ..., 0, \mu_{m}\lambda_{m-1} \cdots \lambda_{1}x_{0},
	\mu_{m+1}\mu_{m}\lambda_{m-1} \cdots \lambda_{2}x_{1}, ...,\\
	&\mu_{2m-2}\cdots \mu_{m}\lambda_{m-1}x_{m-2},\mu_{2m-1}\cdots \mu_{m}x_{m-1},...),
\end{align*}
where the first $k$ coefficients of $T^kx$ are zero.\\
Since $|\lambda_{n}| \geq 1$, for $n < m$, and $|\mu_{n}|=1$, for $n \geq m$, we see that $T$ satisfies
$\|T^{m}x\| = \|T^{m-1}x\|$ and $\|T^{m}x\| \geq \|T^{k}x\|$, for $k=0,...,m-2, \ \forall x \in X$.
It is therefore an $(m, \infty)$-isometry by Proposition \ref{construct1}. Furthermore, it is obvious
that $T$ is not an isometry for $m\geq 2$, if there exists an $n_{0} \geq 1$, such that 
$|\lambda_{n_{0}}|\neq 1$.
\end{Example}

\begin{Example}\label{m-inf finite}
Let \ $T = \left(\begin{array}{cc}
			0 & 1\\
			-1 & 1\\
			\end{array} \right)$ 
be defined on $(\mathbb{K}^2, \|.\|)$, where $\|.\|$ denotes an arbitrary norm.
Since $T^3x=-x$ for all $x \in \mathbb{K}^2$, Proposition \ref{construct2} implies that $T$
is a $(5, \infty)$-isometry. It is easy to see that in general $T$ is not a
$(4, \infty)$-isometry (e.g. consider $x=\binom01$ and $\norm\cdot=\norm\cdot_1$). 
Thus, borrowing terminology from the previous section, we see $T$ is a \emph{strict} $(5,\infty)$-isometry.
\end{Example}

\section{$(m,\infty)$- and $(m,p)$-isometries}

Since $(m,\infty)$-isometries arise as an analogue of
$(m,p)$-isometric operators, two natural questions present themselves.
First, which properties of $(m,p)$-isometries are also enjoyed by
$(m,\infty)$-operators, and second, what is the setwise intersection
of these two classes.  The answer to the latter question is immediate
on comparing Corollary \ref{n leq} with Proposition \ref{power bounded}.

\begin{Proposition}
An $(m, p)$-isometry $T \in B(X)$ is not a $(\mu, \infty)$-isometry for any $\mu \geq 1$, 
$\mu \in \mathbb{N}$, unless it is an isometry.
\end{Proposition}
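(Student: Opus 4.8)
The plan is to combine the growth dichotomy that distinguishes the two classes. The key observation, as the paper itself flags, is the contrast between Corollary~\ref{n leq} and Proposition~\ref{power bounded}. An $(\mu,\infty)$-isometry is always power bounded (Corollary~\ref{n leq}), whereas Proposition~\ref{power bounded} says that a power-bounded $(m,p)$-isometry is forced to be an isometry. So if $T$ is simultaneously an $(m,p)$-isometry and a $(\mu,\infty)$-isometry, the two facts collide and pin $T$ down to being an isometry.

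More precisely, first I would suppose that $T\in B(X)$ is both an $(m,p)$-isometry and a $(\mu,\infty)$-isometry, and aim to show $T$ is an isometry. By Corollary~\ref{n leq}, for every $x\in X$ we have $\norm{T^nx}\le\max_{k=0,\ldots,\mu-1}\norm{T^kx}$ for all $n\in\N$. Setting $C(x):=\max_{k=0,\ldots,\mu-1}\norm{T^kx}$, which is a finite positive real number depending only on $x$ (it is a maximum over finitely many terms), we obtain exactly the hypothesis of Proposition~\ref{power bounded}: the orbit $(\norm{T^nx})_{n\in\N}$ is bounded by $C(x)$ for each fixed $x$. Since $T$ is also an $(m,p)$-isometry, Proposition~\ref{power bounded} immediately yields that $T$ is an isometry.

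The only care-point — and it is minor rather than a genuine obstacle — is to make sure $C(x)>0$, as Proposition~\ref{power bounded} demands a strictly positive bound. For $x\ne 0$ this is automatic, since $\norm{x}=\norm{T^0x}>0$ already forces $C(x)\ge\norm{x}>0$; the zero vector is handled trivially. I would state this inline in a sentence rather than dwelling on it. There is no hard step here: the substantive work has already been done in proving Corollary~\ref{n leq} (power boundedness of $(\mu,\infty)$-isometries) and Proposition~\ref{power bounded} (the rigidity of power-bounded $(m,p)$-isometries), so the proof is essentially a one-line invocation of these two results in sequence, and the statement follows because an isometry is the one common case not excluded.
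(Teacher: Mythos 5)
Your proposal is correct and is precisely the argument the paper intends: the paper states that the answer is ``immediate on comparing Corollary~\ref{n leq} with Proposition~\ref{power bounded}'', i.e.\ the orbit bound $\norm{T^nx}\le\max_{k=0,\ldots,\mu-1}\norm{T^kx}$ from the $(\mu,\infty)$-isometry property feeds directly into the rigidity statement for power-bounded $(m,p)$-isometries. Your additional remark on ensuring $C(x)>0$ is a harmless refinement of the same one-line argument.
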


Nevertheless, we shall see that virtually all basic properties of $(m,p)$-isometric operators (as stated in \cite{ba}
and \cite{sa}) do hold for $(m,\infty)$-isometries.

\begin{Proposition}
\begin{itemize}
	\item[(i)] 
		Let $T \in B(X)$ be a $(1,\infty)$-isometry. Then $T$ is an isometry.
	\item[(ii)] 
		Let $T \in B(X)$ be an isometry. Then $T$ is an $(m,\infty)$-isometry for all 
		$m \in \mathbb{N}$, $m \geq 1$.
	\item[(iii)] 
		An $(m,\infty)$-isometry is bounded below (hence injective) and has therefore closed range.
	\item[(iv)] 
		If $T \in B(X)$ is a $(2,\infty)$-isometry, then $\|Tx\| \geq \|x\|$, for all $x \in X$.
\end{itemize}
\end{Proposition}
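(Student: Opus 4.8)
The plan is to treat the four assertions in order of difficulty, noting that (i), (ii) and (iv) are purely a matter of unwinding the defining equation \eqref{minf}, so that (iii) is the only part carrying genuine content.

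For (i) I would set $m=1$ in \eqref{minf}: the sole even index in $\{0,1\}$ is $0$ and the sole odd index is $1$, so the condition collapses to $\|x\|=\|Tx\|$ for all $x$, which is exactly what it means for $T$ to be an isometry. For (ii), an isometry satisfies $\|T^kx\|=\|x\|$ for every $k\in\N$, whence both maxima in \eqref{minf} equal $\|x\|$ for any $m\geq 1$ and the defining equation holds automatically. For (iv) I would put $m=2$: the even indices in $\{0,1,2\}$ are $\{0,2\}$ while the only odd index is $1$, so \eqref{minf} reads $\max\{\|x\|,\|T^2x\|\}=\|Tx\|$, which immediately delivers $\|Tx\|\geq\|x\|$.

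The substance lies in (iii), and here I would lean entirely on Theorem~\ref{main theorem for (m,inf)}, which supplies an equivalent norm $|\cdot|$ under which $T$ is an isometry, together with the inequalities $\|x\|\leq|x|\leq C\|x\|$ where $C:=\max_{k=0,\ldots,m-1}\|T^k\|$. Since $|Tx|=|x|$, chaining these gives
\[ C\|Tx\|\geq|Tx|=|x|\geq\|x\|, \]
so that $\|Tx\|\geq\frac1C\|x\|$ for all $x$; thus $T$ is bounded below. Injectivity is then free ($Tx=0$ forces $\|x\|=0$), and boundedness below yields closed range by the standard Cauchy-sequence argument: if $Tx_n\to y$ then $(Tx_n)$ is Cauchy, the inequality $\|x_n-x_m\|\leq C\|Tx_n-Tx_m\|$ makes $(x_n)$ Cauchy, and its limit $x$ satisfies $Tx=y$ by continuity, placing $y$ in the range.

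I do not expect any real obstacle: once Theorem~\ref{main theorem for (m,inf)} is in hand, the only thing needing care in (iii) is extracting the bounded-below constant correctly from the norm-equivalence estimates, after which injectivity and closedness of the range are routine consequences that use the completeness of $X$ and the continuity of $T$.
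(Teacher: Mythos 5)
Your proposal is correct and follows essentially the same route as the paper: parts (i), (ii) and (iv) are read off directly from the defining equation \eqref{minf}, and the bounded-below claim in (iii) is deduced from Theorem~\ref{main theorem for (m,inf)} via the norm equivalence $\|x\|\leq|x|\leq C\|x\|$ and $|Tx|=|x|$, exactly as the paper indicates. Your extra details (the explicit constant $1/C$ and the Cauchy-sequence argument for closed range) are standard and sound.
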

\begin{proof}
The only statement which might not be immediately clear from the
definitions is that an $(m,\infty)$-isometry is bounded below, but this follows directly from 
Theorem~\ref{main theorem for (m,inf)}. \footnote{One can also easily derive this directly from the definition,
by assuming the opposite and chosing a suitable sequence $(x_n)_{n \in \N}$ with $\|x_n\|=1$ and 
$\|T x_n\| \rightarrow 0$.}  
\end{proof}
 
\begin{Proposition}\label{m+1}
Suppose $T \in B(X)$ is an $(m,\infty)$-isometry. Then $T$ is an $(m+1,\infty)$-isometry.
\end{Proposition}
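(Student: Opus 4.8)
The plan is to work, as in Lemma \ref{main lemma for (m,inf)} and Corollary \ref{ell operators}, with the sequence $a=(\|T^kx\|)_{k\in\N}$ for a fixed but arbitrary $x\in X$, and to verify condition (ii) of Lemma \ref{main lemma for (m,inf)} with $m$ replaced by $m+1$; by that lemma this is equivalent to $T$ being an $(m+1,\infty)$-isometry. Since $T$ is already an $(m,\infty)$-isometry, Lemma \ref{main lemma for (m,inf)} tells me that $a$ attains a maximum and that, for every $\ell\in\N$, $\max_{k\in\N}a_k=\max_{\ell\le k\le m-1+\ell,\ \pi(k)=\pi(m-1+\ell)}a_k$. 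The attainment of the maximum of $a$ is a property of the sequence alone and so carries over unchanged to the $m+1$ setting; only the maximum identity needs re-proving.

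The key observation is that the $(m,\infty)$-characterisation may be applied after a unit shift of the index. First I would fix $\ell\in\N$ and note that the inequality $\max_{\ell\le k\le m+\ell,\ \pi(k)=\pi(m+\ell)}a_k\le\max_{k\in\N}a_k$ is immediate. For the reverse inequality I would apply the identity above with $\ell$ replaced by $\ell+1$: since $m-1+(\ell+1)=m+\ell$, this yields $\max_{k\in\N}a_k=\max_{\ell+1\le k\le m+\ell,\ \pi(k)=\pi(m+\ell)}a_k$. The index set appearing on the right, namely $\{k:\ell+1\le k\le m+\ell,\ \pi(k)=\pi(m+\ell)\}$, is contained in the larger set $\{k:\ell\le k\le m+\ell,\ \pi(k)=\pi(m+\ell)\}$, so the maximum over the latter is at least as large. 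Combining the two inequalities gives $\max_{k\in\N}a_k=\max_{\ell\le k\le m+\ell,\ \pi(k)=\pi(m+\ell)}a_k$, which is exactly condition (ii) of Lemma \ref{main lemma for (m,inf)} for $m+1$.

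Since $\ell$ and $x$ were arbitrary, this establishes that $T$ is an $(m+1,\infty)$-isometry. I expect no genuine obstacle here: the whole argument is a parity-and-window bookkeeping exercise, and the only point requiring care is to apply Corollary \ref{ell operators} (equivalently the clause in Lemma \ref{main lemma for (m,inf)}) at the shifted base index $\ell+1$ rather than at $\ell$, which is precisely what aligns the parity $\pi(m+\ell)$ of the endpoint with the window of length $m+1$. I would deliberately avoid routing the proof through Theorem \ref{main theorem for (m,inf)}, since the equivalent-isometry norm produced there is a different norm from $\|\cdot\|$ and the defining condition \eqref{minf} is norm-specific, so that approach does not apply directly.
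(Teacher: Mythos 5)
Your proof is correct and is essentially the paper's own argument: the paper likewise applies the $(m,\infty)$-characterisation (Corollary \ref{ell operators}) at the shifted index $\ell+1$, sandwiches $\max_{k=\ell,\ldots,m+\ell,\ \pi(k)=\pi(m+\ell)}\|T^kx\|$ between two copies of $\max_{k\in\N}\|T^kx\|$, and invokes the characterisation again with $m+1$ in place of $m$. Your closing remark about why Theorem \ref{main theorem for (m,inf)} cannot be used directly is also well taken, though not needed.
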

\begin{proof} 
By \ref{ell operators}, $\max\limits_{k\in\N}\|T^kx\|$ exists
and, for all $\ell\in\N$ and $x\in X$, we have
\begin{align*}
	\max_{k\in\N}\|T^kx\| 
	=\max\limits_{\substack{k= \ell+1,...,m+\ell \\ \pi(k) = \pi(m+\ell)}} \|T^{k}x\| 
	\leq\max\limits_{\substack{k= \ell,...,m+\ell \\ \pi(k)=\pi(m+\ell)}}\|T^{k}x\|\leq\max_{k\in\N}\|T^kx\|
\end{align*}	
and the ensuing equality gives the result by invoking \ref{ell operators} again.
\end{proof}

Let now $T \in B(X)$ be an invertible operator. For $\ell \in \mathbb{Z}$, replacing $x$ by
$T^\ell x$ in \eqref{minf} gives us that $T$ is an $(m,\infty)$-isometry if, and only if, 
for all (equivalently, for some) $\ell\in\mathbb Z$,
\begin{align}\label{def (m,inf) invertible operators}
	\max_{\substack{k=\ell,...,\ell+m \\ k \ \textrm{even}}} \|T^{k}x\| 
	= \max_{\substack{k=\ell,...,\ell+m \\ k \ \textrm{odd}}} \|T^{k}x\| \ \ \forall x \in X.
\end{align}
Following a mild modification of the proof, Lemma~\ref{main lemma for (m,inf)} remains valid for a 
$\Z$-indexed sequence $a$ if $\ell\in\N$ and $k\in\N$ are replaced respectively by $\ell \in \Z$ and $k\in\Z$.

\begin{Proposition}\label{m-1}
Suppose $T \in B(X)$ is an invertible $(m,\infty)$-isometry.
\begin{itemize}
	\item[(i)] 
		$T^{-1}$ is an $(m,\infty)$-isometry.
	\item[(ii)] 
		If $m$ is even, $T$ is an $(m-1, \infty)$-isometry.
\end{itemize}
\end{Proposition}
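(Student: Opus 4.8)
The plan is to treat the two parts separately, in both cases exploiting that for an invertible $T$ one may pass to the $\Z$-indexed sequence $(\|T^kx\|)_{k\in\Z}$ and use the characterisation \eqref{def (m,inf) invertible operators} together with the $\Z$-extended forms of Lemma~\ref{main lemma for (m,inf)} and Corollary~\ref{ell operators} recorded just before the statement. Neither part requires genuine analysis; everything reduces to a combinatorial statement about sequences, so the only care needed is in the parity bookkeeping.

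For part (i) I would argue directly from \eqref{def (m,inf) invertible operators}. Since $(T^{-1})^k=T^{-k}$, the operator $T^{-1}$ is an $(m,\infty)$-isometry precisely when, for all $x\in X$,
\[ \max_{\substack{k=0,\dots,m \\ k\ \textrm{even}}}\|T^{-k}x\| = \max_{\substack{k=0,\dots,m \\ k\ \textrm{odd}}}\|T^{-k}x\|. \]
Substituting $j=-k$ maps $\{0,\dots,m\}$ onto $\{-m,\dots,0\}$ and preserves parity (as $\pi(j)=\pi(-j)$), so this equation is exactly \eqref{def (m,inf) invertible operators} for $T$ with $\ell=-m$, which holds by hypothesis. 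This settles (i) with no further work.

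For part (ii) (where $m$ even forces $m\ge 2$, so $m-1\ge 1$) the plan is to verify the criterion of the $\Z$-extended Lemma~\ref{main lemma for (m,inf)} with $m$ replaced by $m-1$. Fix $x\in X$ and put $a=(\|T^kx\|)_{k\in\Z}$. By the $\Z$-extended Corollary~\ref{ell operators}, $a$ attains a maximum $M:=\max_{k\in\Z}a_k$ and satisfies, for every $\ell_0\in\Z$,
\[ M=\max_{\substack{k=\ell_0,\dots,\ell_0+m-1 \\ \pi(k)=\pi(\ell_0+m-1)}}a_k. \]
To conclude that $T$ is an $(m-1,\infty)$-isometry it suffices, by the same equivalence, to establish
\[ M=\max_{\substack{k=\ell,\dots,\ell+m-2 \\ \pi(k)=\pi(\ell+m-2)}}a_k \qquad\textrm{for all }\ell\in\Z. \]

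The decisive move, and the only place where parity of $m$ enters, is to apply the first displayed identity not with shift $\ell$ but with shift $\ell_0=\ell-1$. Because $m$ is even one has $\pi(\ell_0+m-1)=\pi(\ell+m-2)=\pi(\ell)$, and the extra index $\ell_0=\ell-1$ is automatically discarded since it has the wrong parity; the remaining correct-parity indices in $\{\ell-1,\dots,\ell+m-2\}$ are exactly $\ell,\ell+2,\dots,\ell+m-2$. Thus the shifted identity computes $M$ as the maximum over precisely the index set appearing in the desired $(m-1)$-criterion, and since $x$ was arbitrary, $T$ is an $(m-1,\infty)$-isometry. I do not expect a real obstacle here: the substance is the parity computation and the preliminary observation that Lemma~\ref{main lemma for (m,inf)} and Corollary~\ref{ell operators} persist for $\Z$-indexed sequences. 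It is worth remarking that the shift by $\ell-1$ is exactly the mechanism that, in the polynomial setting of Proposition~\ref{inv}, corresponds to ruling out odd degree; here it manifests purely as a parity count.
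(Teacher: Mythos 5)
Your proposal is correct and follows essentially the same route as the paper: part (i) is exactly the observation that \eqref{def (m,inf) invertible operators} with $\ell=-m$ gives the defining condition for $T^{-1}$, and part (ii) is the paper's argument of applying the $\Z$-extended maximum characterisation (Corollary~\ref{ell operators} via the $\Z$-version of Lemma~\ref{main lemma for (m,inf)}) with the shift $\ell-1$, then discarding the index $\ell-1$ because its parity disagrees with $\pi(m-2+\ell)$ when $m$ is even. The only cosmetic difference is that the paper works with $\max_{k\in\N}\|T^kx\|$ while you work with $\max_{k\in\Z}\|T^kx\|$; these coincide for an invertible $(m,\infty)$-isometry, so the two write-ups are the same proof.
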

\begin{proof}
(i) This follows from (\ref{def (m,inf) invertible operators}) on setting $\ell = -m$.\\
(ii)  Suppose $m$ is even. By \ref{ell operators}, $\max_{k\in\N}\|T^kx\|$ exists
 and, for all $\ell\in\N$ and $x\in X$,  
\begin{align*}
	\max_{k\in\N}\|T^kx\| 
	=\max\limits_{\substack{k= \ell-1,...,m-2+\ell \\ \pi(k)=\pi(m-2+\ell)}} \|T^{k}x\|
	=\max\limits_{\substack{k= \ell,...,m-2+\ell \\ \pi(k)=\pi(m-2+\ell)}}\|T^{k}x\|,
\end{align*}
$\ell-1$ being valid in the middle expression because $T$ is
invertible and $\ell-1$ being dropped in the last because $\pi(\ell-1)\ne\pi(m-2+\ell)$
for even $m$. The result then follows by invoking \ref{ell operators} again.
\end{proof}

Example \ref{m-inf finite} shows that Proposition \ref{m-1}.(ii) does not hold in general for odd $m$.
Theorem \ref{main theorem for (m,inf)} allows us to easily deduce further similarities between $(m,p)$- and 
$(m,\infty)$-isometric operators. 

\begin{Proposition}\label{approximate point spectrum}
Let $X$ be a complex Banach space and $T \in B(X)$ an $(m,\infty)$-isometry. Then the approximate point
spectrum $\sigma_{\textrm{ap}}(T)$ of $T$ lies in the unit circle $\mathbb{T}$ and the spectral radius $r(T)$ 
is equal to $1$.
\end{Proposition}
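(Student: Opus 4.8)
The plan is to reduce everything to the isometric case by invoking Theorem~\ref{main theorem for (m,inf)}, which furnishes a norm $|\cdot|$ on $X$, equivalent to $\|\cdot\|$, under which $T$ is an isometry. The key point I would establish first is that both the approximate point spectrum and the spectral radius are insensitive to replacing $\|\cdot\|$ by an equivalent norm: the identity map is a topological isomorphism of $(X,\|\cdot\|)$ onto $(X,|\cdot|)$, so the two algebras of bounded operators agree as algebras (same underlying maps, same products, same invertible elements), giving $\sigma(T)$ independent of the norm; and since $\lambda\in\sigma_{\mathrm{ap}}(T)$ is equivalent to $T-\lambda$ failing to be bounded below --- a condition preserved by equivalent norms --- the set $\sigma_{\mathrm{ap}}(T)$ is likewise unchanged. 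It then suffices to prove both assertions under the assumption that $T$ is a $|\cdot|$-isometry.

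Next I would handle the approximate point spectrum directly from the isometry property. Given $\lambda\in\sigma_{\mathrm{ap}}(T)$, I would choose approximate eigenvectors $x_n$ with $|x_n|=1$ and $|(T-\lambda)x_n|\to 0$. Since $T$ is an isometry, $|Tx_n|=|x_n|=1$, and the estimate $\big||\lambda x_n|-|Tx_n|\big|\le|(T-\lambda)x_n|\to 0$ forces $|\lambda|=\lim_n|\lambda x_n|=1$. This yields $\sigma_{\mathrm{ap}}(T)\subseteq\mathbb{T}$.

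For the spectral radius I would observe that every power $T^n$ is again a $|\cdot|$-isometry, being a composition of isometries, so $|T^n|=1$ for all $n$ (assuming $X\ne\{0\}$, the contrary case being vacuous); the spectral radius formula $r(T)=\lim_n|T^n|^{1/n}=1$ then closes the argument, and $r$ is norm-independent for the same reason $\sigma$ is. I would also remark, so that the first conclusion is not vacuous, that $\sigma_{\mathrm{ap}}(T)$ is nonempty because $X$ is complex, whence $\sigma(T)$ is nonempty compact and $\partial\sigma(T)\subseteq\sigma_{\mathrm{ap}}(T)$.

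The only genuinely delicate point I expect is the norm-invariance of $\sigma_{\mathrm{ap}}$ and $r$; once that is cleanly stated, the isometric computations above are routine.
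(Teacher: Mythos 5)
Your proof is correct and follows essentially the same route as the paper: both reduce to the isometric case via Theorem~\ref{main theorem for (m,inf)} together with the observation that the spectrum, the approximate point spectrum and the spectral radius are unchanged under passage to an equivalent norm. The only difference is one of detail --- where the paper appeals to standard facts about isometries (citing the inclusion $\partial\sigma(T)\subseteq\sigma_{\mathrm{ap}}(T)$ from Conway), you supply the routine computations yourself, namely the approximate-eigenvector estimate for $\sigma_{\mathrm{ap}}(T)\subseteq\mathbb{T}$ and the Gelfand formula $r(T)=\lim_n |T^n|^{1/n}=1$ for the spectral radius.
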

\begin{proof}
The approximate point spectrum contains the boundary of the spectrum (see for example
\cite[Chapter VII. Proposition 6.7]{co}) and spectral properties of an operator do not change under
equivalent norms. Thus, the statement follows from Theorem \ref{main theorem for (m,inf)}. 
\end{proof}

\begin{Remark}\label{real spectrum}
The restriction to the complex case in \ref{approximate point spectrum} has actually no significance. 
One can prove that the approximate spectrum contains the boundary of the spectrum generally 
for bounded operators on real or complex Banach spaces.
\end{Remark}

Recall now that for a bounded operator $T \in B(X)$ and a subset $E \subseteq X$, the \emph{orbit} of $E$ under $T$, is
defined by Orb$(E, T):=\bigcup_{n \in \mathbb{N}}T^{n}(E)$. $T$ is called \emph{hypercyclic} if there exists an 
$x \in X$ such that Orb$(\{x\}, T)$ is dense in $X$ and $N$-\emph{supercyclic} if there exists a subspace 
$E \subseteq X$ with $\dim E$ $=N$, such that Orb$(E, T)$ is dense in $X$.\\
The following follows immediately from Theorem \ref{main theorem for (m,inf)}.

\begin{Proposition}
Let $T \in B(X)$ be an $(m, \infty)$-isometry. Then $T$ is not hypercyclic.
\end{Proposition}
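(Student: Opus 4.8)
The plan is to reduce the claim immediately to the isometric case via Theorem~\ref{main theorem for (m,inf)}. That theorem supplies an equivalent norm $|\cdot|$ on $X$ under which $T$ is an isometry, and hypercyclicity is a purely topological property: it depends only on the topology of $X$, not on the particular norm representing that topology. Since equivalent norms induce the same topology, $T$ is hypercyclic with respect to $\|\cdot\|$ if, and only if, it is hypercyclic with respect to $|\cdot|$. So it suffices to show that no isometry on a normed space can be hypercyclic.

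First I would establish that an isometry $T$ is never hypercyclic. This is a standard and elementary observation: if $T$ is an isometry (in particular $\|T^n y\| = \|y\|$ for all $n\in\N$ and all $y\in X$), then for any $x\in X$ the orbit $\mathrm{Orb}(\{x\},T) = \{T^n x : n\in\N\}$ lies entirely on the sphere of radius $\|x\|$ centred at the origin. A single sphere is a closed set whose complement contains, for instance, the origin and all vectors of differing norm, so it cannot be dense in $X$ (assuming $\dim X \ge 1$, so that vectors of other norms exist). Hence no orbit is dense and $T$ is not hypercyclic.

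Assembling these, the argument runs: given that $T$ is an $(m,\infty)$-isometry, invoke Theorem~\ref{main theorem for (m,inf)} to obtain the equivalent norm $|\cdot|$ under which $T$ is an isometry; observe that every $|\cdot|$-orbit is confined to a $|\cdot|$-sphere and is therefore not $|\cdot|$-dense; and conclude by topological invariance that $T$ is not $\|\cdot\|$-hypercyclic either. The main (and really only) obstacle is the minor one of being careful that hypercyclicity is genuinely a topological notion so that the change of norm is harmless; once that is granted the confinement-to-a-sphere argument is immediate. Given the paper's repeated pattern of deducing such facts ``immediately'' from Theorem~\ref{main theorem for (m,inf)}, I expect the intended proof to be a one- or two-line remark along exactly these lines.
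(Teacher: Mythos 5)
Your proposal is correct and is exactly the argument the paper intends: the paper derives this proposition ``immediately'' from Theorem~\ref{main theorem for (m,inf)}, with the unstated steps being precisely the two you supply (hypercyclicity is invariant under passage to an equivalent norm, and an isometry cannot be hypercyclic since each orbit lies on a sphere, which is not dense). Your write-up just makes explicit what the paper leaves to the reader.
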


However, analogous to the $(m, p)$-isometry case, we can do better. 
Bayart proves in \cite[Theorem 3.4]{ba} that an isometry on a complex infinite-dimensional Banach space is not
$N$-supercyclic, for any $N \geq 1$. Hence, Theorem \ref{main theorem for (m,inf)} implies: 

\begin{Proposition}\label{N-super}
On a complex infinite-dimensional Banach space $X$, an $(m, \infty)$-isometry $T \in B(X)$ 
is not $N$-supercyclic, for any $N \geq 1$.
\end{Proposition}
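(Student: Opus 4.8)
The plan is to transport the conclusion from the equivalent-norm picture supplied by Theorem~\ref{main theorem for (m,inf)} to the original norm, exploiting the fact that $N$-supercyclicity is a topological invariant. First I would invoke Theorem~\ref{main theorem for (m,inf)} to obtain a norm $|\cdot|$ on $X$, equivalent to $\|\cdot\|$, under which $T$ is an isometry. Because $|\cdot|$ is equivalent to $\|\cdot\|$, the space $(X, |\cdot|)$ is again a complex Banach space, and its (algebraic) dimension is unchanged, so it remains infinite-dimensional.

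Next I would observe that $N$-supercyclicity depends only on the topology of $X$ and on $T$ as a set map. Indeed, for any subspace $E \subseteq X$ the orbit $\mathrm{Orb}(E, T) = \bigcup_{n\in\N} T^n(E)$ is determined by $T$ alone, independently of the choice of norm, and the density of this orbit is a purely topological condition. Since equivalent norms induce the same topology on $X$, and since the dimension of a subspace is an algebraic quantity, a subspace $E$ with $\dim E = N$ and dense orbit exists for $(X, \|\cdot\|)$ if, and only if, it exists for $(X, |\cdot|)$. Hence $T$ is $N$-supercyclic with respect to $\|\cdot\|$ precisely when it is $N$-supercyclic with respect to $|\cdot|$.

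Finally I would apply Bayart's result \cite[Theorem 3.4]{ba}, which states that an isometry on a complex infinite-dimensional Banach space is not $N$-supercyclic for any $N \geq 1$. As $T$ is an isometry on the complex infinite-dimensional Banach space $(X, |\cdot|)$, it fails to be $N$-supercyclic there, and by the topological invariance just noted it is therefore not $N$-supercyclic for the original norm either, which gives the claim.

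There is no genuine obstacle in this argument; it is essentially a one-line reduction to \cite[Theorem 3.4]{ba} via Theorem~\ref{main theorem for (m,inf)}. The only point that warrants being stated explicitly — and the one on which the whole argument rests — is the invariance of $N$-supercyclicity under passage to an equivalent norm, together with the observation that both infinite-dimensionality and the complex scalar field are preserved by that passage.
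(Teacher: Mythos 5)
Your proposal is correct and follows exactly the paper's route: the paper likewise deduces the result immediately from Theorem~\ref{main theorem for (m,inf)} combined with Bayart's \cite[Theorem 3.4]{ba}, relying implicitly on the invariance of $N$-supercyclicity under equivalent norms. You have merely made that invariance argument explicit, which the paper leaves unstated.
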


We conclude that almost all basic properties (Proposition \ref{power bounded} being a notable exception)
of $(m,p)$-isometric operators for $p \in (0, \infty)$ on Banach spaces are also valid for $p = \infty$.\\

\textbf{Acknowledgement}\\[1ex]
The authors wish to thank the referee for  suggestions which led to a
significant improvement of this paper.

$\textrm{}$\\

School of Mathematical Sciences, University College Dublin, Ireland\\

Philipp Hoffmann\\
\emph{email: philipp.hoffmann@ucdconnect.ie}\\

Michael Mackey\\
\emph{email: michael.mackey@ucd.ie}\\

Mícheál Ó Searcóid\\
\emph{micheal.osearcoid@ucd.ie}

\end{document}